\newtheorem{theorem}{Theorem}[section]
\newtheorem{lemma}{Lemma}[section]
\newcommand{\ie}{\, \mathrm{i}.\mathrm{e}. \, }
\begin{document}
	
\begin{frontmatter}
	
	\title{An exponential integrator multicontinuum homogenization method for fractional diffusion problem with multiscale coefficients}
	
	\author{Yifei Gao$^{1}$}
	\ead{rootgyf340@stu.xjtu.edu.cn}
	\author{Yating Wang$^{1}$\corref{cor1}}
	\ead{yatingwang@xjtu.edu.cn}
    \author{Wing Tat Leung$^{2}$}
    \ead{wtleung27@cityu.edu.hk}
    \author{Zhengya Yang$^{1}$}
    \ead{yangzy@stu.xjtu.edu.cn}
	
	\address{$^1$ School of Mathematics and Statistic, Xi'an Jiaotong University, Xi'an 710049, People's Republic of China}
    \address{$^2$ Department of Mathematics, City University of Hong Kong, Hong Kong Special Administrative Region}

	\cortext[cor1]{Corresponding author}

	\begin{abstract}
		In this paper, we present a robust and fully discretized method for solving the time fractional diffusion equation with high-contrast multiscale coefficients. We establish the homogenized equation using a multicontinuum approach and employ the exponential integrator method for time discretization. The multicontinuum upscaled model captures the physical characteristics of the solution for the high-contrast multiscale problem, including averages and gradient effects in each continuum at the coarse scale. We then use the exponential integration method for the nonlocal time fractional derivative and it can handle semilinear problem in an efficient way. Convergence analysis of the numerical scheme is provided, along with illustrative numerical examples. Our results demonstrate the accuracy, efficiency, and improved stability for varying order of fractional derivatives.
	\end{abstract}
	
	\begin{keyword}
		{Multicontinuum homogenization}; {Multiscale}; {Fractional}; {Exponential integrator}; {Convergence}
	\end{keyword}
\end{frontmatter}

\section{Introduction}


Time fractional diffusion equations are extensively utilized to describe subdiffusion phenomena in porous media. The fractional operators can be a useful way to include memory in the dynamical process. It is modeled through fractional order derivatives carrying information on its present and past states. Due to the highly heterogeneous nature of the underlying media, the multiscale coefficients in these equations often exhibit high-contrast and nonlocality characteristics. Such features are prevalent in various applications, including composite materials, groundwater modeling, and reservoir simulations. It is essential to consider the heterogeneities of porous media and the phenomena of subdiffusion, and we could achieve more accurate modeling in practical applications. However, the time fractional derivative and multiscale features of the semilinear problems pose significant challenges in simulation. 


Indeed, the presence of high-contrast/multicontinuum fields leads to the lack of smoothness in the exact solution, and the fractional order of the time derivative results in long-term memory. Thus, it is challenging to construct accurate and computationally efficient numerical schemes. To capture all scale information of the solution, a small mesh size is usually needed for spatial discretization to ensure accuracy. The stiffness arising from the multiscale coefficients requires using very small time steps to ensure stability for standard explicit temporal discretization. When the order of the time fractional derivative reaches zero, some numerical schemes turn out to be unstable due to the singularity \cite{stynes2017error, liao2018sharp}. The nonlocality of the fractional derivative also makes long-time simulation prohibitive, and some fast computational methods were proposed to address this issue \cite{jiang2017fast, gu2020parallel}. 

In previous works, many developed multiscale models have been proposed. These multiscale approaches include Multiscale Finite Element methods (MsFEM) \cite{efendiev2009multiscale,hou1997multiscale,hou1999convergence}, mixed MsFEM \cite{chen2003mixed}, numerical homogenized methods \cite{dixon1985order, lubich1982runge, bourgeat1984homogenized}, numerical upscaling methods \cite{durlofsky1991numerical, chen2003coupled, wu2002analysis}, generalized multiscale finite element methods (GMsFEM) \cite{efendiev2013generalized, efendiev2011multiscale, chung2015residual}, localized orthogonal decomposition (LOD) \cite{maalqvist2014localization}, constraint energy minimizing GMsFEM (CEM-GMsFEM) \cite{chung2018constraint} and nonlocal multicontinuum method (NLMC) \cite{chung2018non}, to name a few. Recently, the multicontinuum model is proposed based on the framework of multicontinuum homogenization \cite{efendiev2023multicontinuum, leung2024some}. It can capture the property of the solution with multiple quantities. The main idea is to represent the solution via averages and its gradient effects in each continua. For diffusion equations, some previous works have been developed \cite{contreras2023exponential, jin2019numerical,li2024wavelet}. Researchers have developed many methods to discretize the time fractional derivative. And they are based on the previous research about the existence and uniqueness of the solution of time fractional equations \cite{li2010existence,kemppainen2011existence}. The L1 scheme is one of the well known time stepping schemes that uses piecewise linear approximation on each time step \cite{lin2007finite, li2019constraint, podlubny1998fractional, yang2024accurate}. To overcome the difficulties caused by the stiffness and semilinear term, there has been a renewed interest in using exponential integrator (EI) methods \cite{cox2002exponential,garrappa2011accurate,garrappa2013exponential,garrappa2011accurate,hosseini2017solution,contreras2023exponential,li2023exponential,li2021high}. 

In this work, we propose a robust and accurate scheme to solve the semilinear high-contrast time fractional diffusion equation. We derive the semi-discretized coarse scale model based on the multicontinuum homogenization method. Then we adopt the exponential integrator method for temporal discretization to ensure stability and efficiency for the semilinear problem. EI can handle semilinear problems without nonlinear iterative methods, which can greatly improve computational efficiency for fractional problems. The convergence of our proposed approach is presented. The main contributions of this work are the following.
\begin{compactitem}
    \item We derive time fractional multicontinuum upscaled model and give the error estimate for the semi-discretization scheme.
    \item We then adopt one-step and two-step exponential integrator for temporal discretization for the upscaled model. The convergence results for the fully discretized system of the semilinear problem is presented.
    \item We perform some numerical experiments to show the accuracy and stability of our methods. We observe that the proposed method is more stable than classic method L1 scheme when the fractional order is small for the high-contrast subdiffusion equation.
\end{compactitem}


The paper is organized as follows. In Section \ref{Problem_setup}, we will introduce the problem setting and some preliminary definitions for the latter discussion. In Section \ref{Space_discretization}, we will discuss the spatial discretization and present the analysis of semi-discretized problems. We discuss the temporal discretization and present the analysis of full-discretized scheme in Section \ref{Time_discretization}. We present numerical results in Section \ref{Numerical_experiments} and give concluding remarks in the final section.

In order to simplify the notations, the notations \(C\), with or without a subscript, denote generic constants, which may differ at different occurrences, but it is always independent of any parameters.

\section{Problem setup}\label{Problem_setup}

Let  \(\Omega \subset \mathbb{R}^{2}\) be a bounded domain. In this paper, we focus on the following time fractional diffusion equation
\begin{equation}\label{e1}
    \left\{
        \begin{aligned}
            ^{C}_{0}{D}_{t}^{\alpha} u - \mathrm{div} (\kappa  \nabla u)  &= f(u) \ &\mathrm{in}\,&\Omega \times \bigl(0,T\bigr] ,\\
            u(t,x) &= 0 \ &\mathrm{on}\,&\partial \Omega \times \bigl(0,T\bigr] ,\\
            u(0,x) &= u_0(x) \ &\mathrm{in}\, &\Omega ,
        \end{aligned}
    \right.
\end{equation}
where \( ^{C}_{0}{D}_{t}^{\alpha} u \) is defined as the Caputo fractional derivatives of order \( \alpha \in (0,1) \) \cite{podlubny1998fractional}
\begin{equation}
    ^{C}_{0}{D}_{t}^{\alpha} u(x,t) = \frac{1}{\Gamma ( 1 - \alpha)} \int_{0}^{t} \frac{\partial u(x,s)}{\partial s} \frac{1}{ (t-s)^{\alpha}} \,\mathrm{d}s .
\end{equation}
We assume that the source term \(f(u)\) satisfies Lipschitz condition with constant \(L\) such that the problem \eqref{e1} is well-posed. We also assume that \(\kappa \) is a high-contrast heterogeneous permeability field. It is uniformly positive and bounded in \(\Omega, \ \ie \  \exists\,0< \kappa _{0} < \kappa _{1} < \infty \) such that \( \kappa _{0} \leq \kappa  \leq \kappa _{1}, \ \mathrm{in}\,\overline{\Omega} \).


There are three distinct scales which is highly related to our proposed homogenization method:
\begin{compactitem}
    \item The physical microscopic scale \(\varepsilon\), which represents the behavior and microscopic structure with high-contrast;
    \item The computational upscaling scale \(H\), where the homogenized problem is posed;
    \item The observing scale \(h\), where the multicontinuum quantities, such as local averages of the multiscale solution, are defined.
\end{compactitem}
These scales parameters satisfy \(\varepsilon \leq h < H\).

We start with the weak form of \eqref{e1}: find \(u \in H_{0}^{1} (\Omega) \) satisfying
\begin{equation}\label{weak_form}
    \bigl(\,_{0}^{C} D _{t}^{\alpha} u , v\,\bigr) + a_{\varepsilon} (u,v) = (f(u) , v) \quad  \forall\,v \in H_{0}^{1} (\Omega),
\end{equation}
where \((\cdot,\cdot)\) represent the standard \(L^{2}\) inner product and \(a_{\varepsilon}(\cdot,\cdot)\) is defined as \(a_{\varepsilon}(u,v) = \int_{\Omega} \kappa \nabla u \cdot \nabla v\). Let \(\omega _{i}\) be disjoint and \(\Omega = \bigcup\limits_{i} \omega _{i}\). We then extent the definition of the bilinear operator \(a_{\varepsilon}(\cdot,\cdot) \) to \(H^{1}(\Omega)\) as follows
\begin{equation*}
    a_{\varepsilon} (u,v) = \sum\limits_{i} \int_{\omega _{i}} \kappa  \nabla u | _{\omega _{i}} \cdot \nabla v | _{\omega _{i}}.
\end{equation*}
Moreover, we define \( \bigl\lVert u \bigr\rVert _{a(\omega)} \coloneqq \bigl(\,\int_{\omega} \kappa  \bigl\lvert\,\nabla u\,\bigr\rvert ^{2}\,\bigr)^{\frac{1}{2}} \) and denote \(\bigl\lVert \cdot \bigr\rVert _{a} = \bigl\lVert \cdot \bigr\rVert _{a(\Omega)} \). Let \(\bigl\lVert \cdot \bigr\rVert \) be the standard \(L^{2}\) norm of the domain \(\Omega\). We also assume the domain \(\Omega\) can be divided into two subregions \(\Omega = \Omega _{0} \cup \Omega _{1}\). The subdomains \(\Omega _{0}\) and \(\Omega _{1} \) represent the region with high and low conductivity respectively.

\section{Time fractional multicontinuum upscaling model}\label{Space_discretization}

In this section, we would like to derive the multicontinuum upscaling model. We start with the construction of local basis and present the related convergence analysis. The computational domain \(\Omega\) can be partitioned with respect to different scales. Given \(z \in [0,H] ^{d}\) ($d=2,3$), we define a rectangular partition \(\mathcal{T} _{H}(z)\) with mesh size \(H\)
\begin{equation*}
    \mathcal{T}_{H} (z) = \bigl\{\,K \subset \Omega \mid K = x_{i} + ( - \frac{H}{2} , \frac{H}{2} ) ^{d} \cap \Omega , \frac{x_{i} - z}{H} \in \mathbb{Z}^{d}\,\bigr\}, \ i=1,2,\cdots
\end{equation*}
For each coarse block \(K_{H}(x_{i}) = K_{z,H} (x_{i}) \coloneqq x_{i} + ( - \frac{H}{2} , \frac{H}{2} ) ^{d} \cap \Omega \in \mathcal{T}_{H}(z)\), we can separate the element into two subregions, \(K_{H,0}(x_{i}) = K_{H}(x_{i}) \cap \Omega _{0}, \ K_{H,1}(x_{i}) = K_{H}(x_{i}) \cap \Omega _{1}\). The index set is defined as \( I_{H} = I_{z,H} = \bigl\{\,x_{i} \mid K_{H}(x_{i}) \in \mathcal{T}_{H}(z) \,\bigr\}\), it contains all of the center points of the coarse elements in the partition \(\mathcal{T}_{H} (z)\). Similarly, we can define the other partition \(\mathcal{T} _{h}\) and its index set \(I _{h}\) with the mesh size \(h\). An illustration of the mesh grid is shown in Figure \ref{mesh_grid}.

\begin{figure}[H]
    \centering
    \includegraphics[width=13cm]{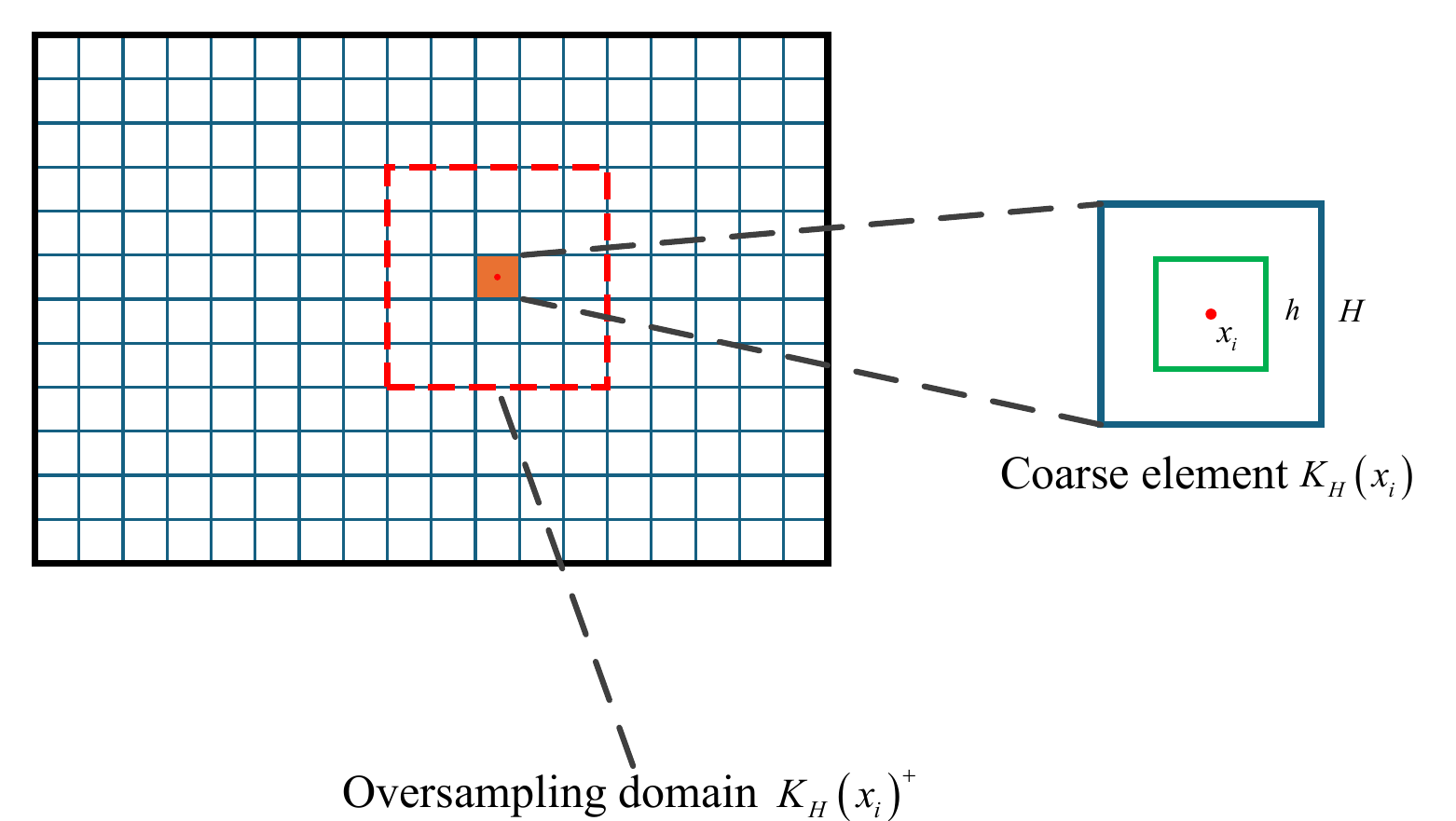}
    \caption{The fine grid, coarse grid \(K_{H}(x_{i})\), oversampling domain \(K_{H}(x_{i})^{+}\).}
    \label{mesh_grid}
\end{figure}

\subsection{Construction of local problems}

In this subsection, we construct the local problems for the multicontinuum upscaling model. We define the auxiliary basis \(\psi _{i,j} = \chi_{K _{H}(x_{i}) } = 1 \quad \mathrm{in \  continuum \ } i\) and \( \ 0 \quad \mathrm{otherwise}\). Based on CEM-GMsFEM/NLMC, the global basis functions \(\varphi _{l,j} \in H _{0}^{1} (\Omega) \quad j=0,1\) can be obtained by solving the constrained energy minimization problems. 
\begin{equation}
    \begin{aligned}
        a_{\varepsilon} (\varphi _{l,i},v) + \sum\limits_{x _{k} \in I_{z,h}}\sum\limits_{j=0,1} \eta ^{k,j} (\psi _{k,j}, v) &= 0,\\
        (\varphi_{l,i} , \psi _{k,j}) &= \delta_{ij} \delta _{lk} \int_{\Omega} \psi_{k,j},
    \end{aligned}
\end{equation}
where \(\eta ^{k,j}\) represents the energy constraint constant. We set the global multiscale space as \(V_{glo,h} = \mathop{\mathrm{span}} _{l,j} \bigl\{\,\varphi_{l,j}\,\bigr\} = \bigl\{\,v \in H_{0}^{1}(\Omega) \mid v = \sum _{l,j} v_{l,j} \varphi_{l,j}\,\bigr\} \). The convergence of the CEM-GMsFEM method with global basis is as follows \cite{chung2018non}, and will be used in our latter discussion.
\begin{lemma}\label{cem_global_basis_error_bound}
    Let \(f \in L^{2}(\Omega) \) and \( u_{glo,h} \in V_{glo,h} \) be the solution of \(a_{\varepsilon} (u_{glo,h} , v) = (f,v) \quad \forall\,v \in V_{glo,h}\). We have \(\int_{\Omega} (u_{glo,h} - u) \psi_{k,j} = 0 \quad \forall\,k,j\) and \( \bigl\lVert u_{glo,h} - u \bigr\rVert _{a} = O(h)\).
\end{lemma}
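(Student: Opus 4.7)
The plan is to prove the two assertions separately, using the characterization of $V_{glo,h}$ as the $a_\varepsilon$-orthogonal complement of the ``null'' subspace $V_{\perp} := \{v \in H_0^1(\Omega) : (v, \psi_{k,j}) = 0 \text{ for all } k,j\}$ in $H_0^1(\Omega)$. This characterization can be read off directly from the Lagrangian system defining $\varphi_{l,j}$: testing the first equation against any $v \in V_{\perp}$ kills the constraint term and yields $a_\varepsilon(\varphi_{l,i}, v) = 0$, so the span of the $\varphi_{l,i}$'s is $a_\varepsilon$-orthogonal to $V_{\perp}$, while a dimension count using the constraint $(\varphi_{l,i}, \psi_{k,j}) = \delta_{ij}\delta_{lk}\int\psi_{k,j}$ shows these two spaces together fill $H_0^1(\Omega)$, giving $H_0^1(\Omega) = V_{glo,h} \oplus_{a} V_{\perp}$.

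\emph{Orthogonality.} For each pair $(k,j)$, introduce the auxiliary potential $\phi_{k,j} \in H_0^1(\Omega)$ solving $a_\varepsilon(\phi_{k,j}, v) = (\psi_{k,j}, v)$ for every $v \in H_0^1(\Omega)$. For any $v \in V_{\perp}$ the right-hand side vanishes, so $\phi_{k,j}$ is $a_\varepsilon$-orthogonal to $V_{\perp}$, i.e.\ $\phi_{k,j} \in V_{glo,h}$. Combining this with the Galerkin orthogonality $a_\varepsilon(u - u_{glo,h}, w) = 0$ for $w \in V_{glo,h}$ and the symmetry of $a_\varepsilon$ gives $(u - u_{glo,h}, \psi_{k,j}) = a_\varepsilon(\phi_{k,j}, u - u_{glo,h}) = 0$, which is the first claim.

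\emph{Energy estimate.} By C\'ea's lemma it suffices to exhibit $\tilde u \in V_{glo,h}$ with $\|u - \tilde u\|_a = O(h)$. I take $\tilde u$ as the unique element of $V_{glo,h}$ whose $\psi$-moments match those of $u$, namely $(\tilde u, \psi_{k,j}) = (u, \psi_{k,j})$ for all $k,j$; existence and uniqueness follow from the constraint $(\varphi_{l,i}, \psi_{k,j}) = \delta_{ij}\delta_{lk}\int \psi_{k,j}$, which makes the map $V_{glo,h} \to V_{aux}$ given by $\psi$-moments a bijection. Then $u - \tilde u \in V_{\perp}$, so by the $a_\varepsilon$-orthogonal splitting $a_\varepsilon(\tilde u, u - \tilde u) = 0$, and using the weak form of the PDE
\[
\|u - \tilde u\|_a^2 = a_\varepsilon(u, u - \tilde u) = (f, u - \tilde u) \le \|f\|\,\|u - \tilde u\|.
\]
The missing ingredient is a Poincar\'e-type inequality $\|w\| \le Ch\,\|w\|_a$ valid for every $w \in V_{\perp}$ with $C$ independent of the contrast, applied to $w = u - \tilde u$. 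This yields $\|u - \tilde u\|_a \le Ch\|f\|$ and, via C\'ea, $\|u_{glo,h} - u\|_a = O(h)$.

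\emph{Main obstacle.} The crux is the contrast-independent Poincar\'e inequality on $V_{\perp}$ at the observing scale $h$. It rests on a local spectral-gap property: on each fine block $K_{h}(x_i)$ the auxiliary basis $\{\psi_{k,j}\}$ must be rich enough to capture the low-energy modes of the local operator $-\mathrm{div}(\kappa \nabla \cdot)$, so that the smallest eigenvalue of the complementary spectrum is bounded below by $C h^{-2}$ uniformly in $\kappa_0, \kappa_1$. Once this local estimate is in place, summing over all blocks gives the global inequality, and the rest of the argument is purely Galerkin. This spectral-gap step, which is precisely what the multicontinuum construction is designed to guarantee, is the step I expect to require the most care.
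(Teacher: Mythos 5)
First, note that the paper does not actually prove this lemma: it is imported verbatim from the CEM-GMsFEM/NLMC literature (the citation to \cite{chung2018non}), so there is no in-paper proof to match yours against. Your skeleton is the standard argument from that literature and the structural parts are correct: the saddle-point system for $\varphi_{l,i}$ does give $a_{\varepsilon}(\varphi_{l,i},v)=0$ for all $v$ with vanishing $\psi$-moments, the constraint makes the moment map on $V_{glo,h}$ a bijection, and hence $H_{0}^{1}(\Omega)=V_{glo,h}\oplus_{a}V_{\perp}$. Two small remarks on economy: the first claim follows in one line, since Galerkin orthogonality says $u-u_{glo,h}\in V_{glo,h}^{\perp_{a}}=V_{\perp}$, which \emph{is} the statement $\int_{\Omega}(u-u_{glo,h})\psi_{k,j}=0$; your detour through the auxiliary potentials $\phi_{k,j}$ is valid but unnecessary. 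Likewise, once $u-u_{glo,h}\in V_{\perp}$ you can run the energy estimate directly on $e=u-u_{glo,h}$ via $\lVert e\rVert_{a}^{2}=(f,e)$ without introducing $\tilde u$.

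The genuine gap is the one you flag yourself and then do not close: the inequality $\lVert w\rVert\leq Ch\lVert w\rVert_{a}$ for $w\in V_{\perp}$ with $C$ independent of the contrast. This is not a routine Poincar\'e inequality — it is the entire quantitative content of the lemma. With the auxiliary basis used here ($\psi_{k,j}$ = indicator of a cell intersected with continuum $j$), zero moments only give zero averages on each $K_{h}(x_{l})\cap\Omega_{j}$, and the naive estimate $\lVert w\rVert_{L^{2}(K\cap\Omega_{0})}\leq Ch\lVert\nabla w\rVert_{L^{2}}\leq Ch\,\kappa_{0}^{-1/2}\lVert w\rVert_{a}$ degenerates as the contrast grows. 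A uniform-in-contrast bound requires either spectral enrichment of the auxiliary space (so that all low-energy local modes of $-\mathrm{div}(\kappa\nabla\cdot)$ are captured) or geometric/connectivity assumptions on $\Omega_{0}$ and $\Omega_{1}$; your asserted local eigenvalue bound $Ch^{-2}$ ``uniformly in $\kappa_{0},\kappa_{1}$'' is false for a generic two-continuum indicator basis without such hypotheses. So your proposal is an accurate roadmap of the standard proof, but as written it reduces the lemma to an unproved (and conditionally true) spectral-gap assumption rather than establishing the $O(h)$ rate.
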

We can denote \(u_{glo,h} = \sum _{x_{l} \in I_{z}} U_{l,i} \, \varphi_{l,i}\) where the coefficients \(U_{l,i} = \dfrac{(u, \psi_{l,i})}{\int_{\Omega} \psi_{l,i}}\) are the local average of the solution \(u\) in the coarse elements \(K_{H}(x_{l})\).

In this work, we would like to introduce a time fractional multicontinuum model based on \cite{efendiev2023multicontinuum,leung2024some}, where the local problems correspond to the average and the gradient effect of the solution. To be more specific, to derive the homogenized equation on the coarse scale, we first need to find \( \phi_{i} \) and \( \phi_{i}^{m}\)  in \(i\)-th coarse region from the following two cell problems with constraints.
\begin{equation}\label{average_cellproblems}
    \left\{
        \begin{aligned}
            \int_{K_{H}^{+}(x)} \kappa  (y) \nabla _{y} \phi_{i}(x,y) \nabla _{y} v &=  \sum\limits_{k = 0,1} \sum\limits_{x_{j} \in I_{H}  \cap K_{H}(x) ^{+} } d_{jk} \bigl(\,\psi_{k}(x_{j},y),v\,\bigr), \\
            \bigl(\,\phi_{i} (x,y) , \psi _{k} (x_{j},y)\,\bigr) &= \delta_{ki} \int_{K_{H}(x)^{+}} \psi_{k} ( x_{j},y ) \quad \forall\,x_{j} \in I_{H} \cap K_{H}(x)^{+},
        \end{aligned}
    \right.
\end{equation}

\begin{equation}\label{gradienteffect_cellproblems}
    \left\{
        \begin{aligned}
            \int_{K_{H}^{+}(x)} \kappa (y) \nabla _{y} \phi_{i}^{m} (x,y) \nabla _{y} v &=  \sum\limits_{k = 0,1} \sum\limits_{x_{j} \in I_{H} \cap K_{H}(x) ^{+} } d_{jk} \bigl(\,\psi_{k}(x_{j},y),v\,\bigr), \\
            \bigl(\,\phi_{i}^{m} (x,y) , \psi _{k} (x_{j},y)\,\bigr) &= \delta_{ki} \int_{K_{H}(x)^{+}} (y^{m} - c_{x}^{(l)}) \psi_{k} ( x_{j},y ) \quad \forall\,x_{j} \in I_{H} \cap K_{H}(x)^{+},
        \end{aligned}
    \right.
\end{equation}
where \(K_{H}(x)^{+}\) is the oversampling domain which enrich \(K_{H} (x)\) by \(k \ h\)-layer, \(y^{(m)}\) is the \(l\)-th coordinate of \(y \in \mathbb{R}^{d}\) and \(c_{x}^{(m)}\) are some constants such that \(\int_{K_{H}(x)} (y^{(m)} - c_{x}^{(m)}) = 0\). The solution $\phi_{i}$ of the cell problem \eqref{average_cellproblems} describes the averages in each continua while the solution \( \phi_{i}^{m} \) of \eqref{gradienteffect_cellproblems} represents the linear basis function. The upscaling model with the following scaling holds
\begin{equation}\label{scalings}
    \bigl\lVert \phi_{i} \bigr\rVert = O(1), \ \bigl\lVert \nabla \phi_{i} \bigr\rVert = O(\frac{1}{\varepsilon}), \ \bigl\lVert \phi_{i}^{m} \bigr\rVert = O(\varepsilon), \ \bigl\lVert \nabla \phi_{i}^{m} \bigr\rVert = O(1).
\end{equation}

\subsection{Multi-continuum upscaling model}
Now, we present the construction of the multiscale upscaling model. We assume that in each coarse block \(K_{H}(x)\), the solution \(u\) could be approximated by \(u = \phi_{i} U_{i} + \phi_{i}^{m} \nabla_{m} U_{i}\), where \(U_{i}\) can be viewed as a limit of \(\frac{\int_{K_{H}(x)} u\psi_{i}}{\int_{K_{H}(x)} \psi_{i}}\) as the block size goes to zero. It represents the average solution in \(i\)-th continua. With this expression of \(u\), we can write
\begin{equation}\label{diffusion_term_approx}
    \int_{K_{H}(x)} \kappa  \nabla u \cdot \nabla v = \int_{K_{H}(x)} \kappa  \nabla (\phi_{i} U_{i}) \cdot \nabla v + \int_{K_{H}(x)} \kappa  \nabla (\phi_{i}^{m} \nabla_{m} U_{i}) \cdot \nabla v.
\end{equation}
Approximately, we have
\begin{equation*}
    \begin{aligned}
        \int_{K_{H}(x)} \kappa  \nabla (\phi_{i} U_{i}) \cdot \nabla v &= \int_{K_{H}(x)} \kappa  (\nabla \phi_{i}) U_{i} \cdot \nabla v \\
        &=  U_{i}(x) \int_{K_{H}(x)} \kappa  \nabla \phi_{i} \cdot \nabla v ,\\
        \int_{K_{H}(x)} \kappa  \nabla(\phi_{i}^{m} \nabla_{m} U_{i}) \cdot \nabla v &= \int_{K_{H}(x)} \kappa  \nabla (\phi_{i}^{m}) \nabla_{m} U_{i} \cdot \nabla v \\
        &= \nabla_{m} U_{i}(x) \int_{K_{H}(x)} \kappa  \nabla \phi_{i}^{m} \cdot \nabla v.
    \end{aligned}
\end{equation*}
We take \(v = \phi _{j} V_{j} + \phi _{j}^{n} \nabla _{n} V _{j}\). Substituting the above equations into \eqref{diffusion_term_approx}, we obtain
\begin{equation*}
    \begin{aligned}
        &\int_{K_{H}(x)} \kappa  \nabla u \cdot \nabla v = \int_{K_{H}(x)} \kappa  \nabla (\phi_{i} U_{i}) \cdot \nabla v + \int_{K_{H}(x)} \kappa  \nabla (\phi_{i}^{m} \nabla_{m} U_{i}) \cdot \nabla v\\
        = &\nabla _{m} U_{i}(x) \nabla _{n} V_{j}(x) \int _{K_{H}(x)} \kappa \nabla \phi _{i}^{m} \cdot \nabla \phi _{j}^{n} + \nabla _{m} U_{i}(x) V_{j}(x) \int _{K_{H}(x)} \kappa \nabla \phi _{i}^{m} \cdot \nabla \phi _{j}\\
        &+ U_{i}(x) \nabla _{n} V_{j}(x) \int _{K_{H}(x)} \kappa \nabla \phi _{i} \cdot \nabla \phi _{j}^{n} +  U_{i}(x) V_{j}(x) \int _{K_{H}(x)} \kappa \nabla \phi _{i} \cdot \nabla \phi _{j} .
    \end{aligned}
\end{equation*}
Denote by \(\alpha_{ij}^{mn} = \int_{K_{H}(x)} \kappa \nabla \phi_{i}^{m} \cdot \nabla \phi_{j}^{n}, \ \beta_{ij}^{m} = \int_{K_{H}(x)} \kappa \nabla \phi_{i}^{m} \cdot \nabla \phi_{j}, \ \gamma_{ij} = \int_{K_{H}(x)} \kappa \nabla \phi_{i} \cdot \nabla \phi_{j}\). The above equations can be simplified as
\begin{equation*}
    \int_{K_{H}(x)} \kappa  \nabla u \cdot \nabla v = \nabla_{m} U_{i} \alpha_{ij}^{mn} \nabla_{n} V_{j} + \nabla_{m} U_{i} \beta_{ij}^{m} V_{j} + U_{i} \beta_{ji}^{n} \nabla_{n} V_{j} + U_{i} \gamma_{ij} V_{j}.
\end{equation*}
Assume the cell problems do not depend on time \(t\), for the time fractional problem \eqref{weak_form}, we have \(^{C}_{0}{D}_{t}^{\alpha} u = \phi_{i} \,_{0}^{C}D_{t}^{\alpha} U_{i} + \phi_{i}^{m} \,_{0}^{C}D_{t}^{\alpha} \nabla_{m}U_{i}\)
\begin{equation*}
    \begin{aligned}
        \int_{K_{H}(x)} \,_{0}^{C}D_{t}^{\alpha} u \, v = &\,_{0}^{C}D_{t}^{\alpha} U_{i} V_{j} \int_{K_{H}(x)} \phi_{i} \phi_{j} +  \,_{0}^{C}D_{t}^{\alpha} \nabla_{m}U_{i} V_{j} \int_{K_{H}(x)} \phi_{i}^{m} \phi_{j}\\
        &+ \,_{0}^{C}D_{t}^{\alpha} U_{i} \nabla_{n} V_{j} \int_{K_{H}(x)} \phi_{i} \phi_{j}^{n} + \,_{0}^{C}D_{t}^{\alpha} \nabla_{m}U_{i} \nabla_{n} V_{j} \int_{K_{H}(x)} \phi_{i}^{m} \phi_{j}^{n}.\\       
    \end{aligned}
\end{equation*}
We neglect the higher order terms based on scalings \eqref{scalings}. Let \(m_{ij} = \int_{K_{H}(x)} \phi_{i} \phi_{j}\), we derive the time fractional derivative term as
\begin{equation*}
    \int_{K_{H}(x)} \,_{0}^{C}D_{t}^{\alpha} u  v = \, _{0}^{C}D_{t}^{\alpha} U_{i} m_{ij} V_{j} .
\end{equation*}

We then define rescaled quantities as follows
\begin{equation*}
    \widehat{\alpha_{ij}^{mn}} = \frac{1}{\bigl\lVert K_{H}(x) \bigr\rVert} \alpha_{ij}^{mn}, \ \widehat{\beta_{ij}^{m}} = \frac{\varepsilon}{\bigl\lVert K_{H}(x) \bigr\rVert} \beta_{ij}^{m}, \ \widehat{\gamma_{ij}} = \frac{\varepsilon^{2}}{\bigl\lVert K_{H}(x) \bigr\rVert} \gamma_{ij}, \  \widehat{m _{ij}} = \frac{1}{\bigl\lVert K_{H}(x) \bigr\rVert} m_{ij}.
\end{equation*}
Thus, the weak form of the time fractional homogenized problem can be written as

\begin{equation}\label{weak_homo_form_time}
    \begin{aligned}
        &\int_{\Omega} \widehat{m_{ij}} \,_{0}^{C}D_{t}^{\alpha} U_{i} V_{j} \\
        + &\int_{\Omega} \widehat{\alpha_{ij}^{mn}} \nabla_{m} U_{i} \nabla_{n} V_{j} + \frac{1}{\varepsilon} \int_{\Omega} \widehat{\beta_{ij}^{m}} \nabla_{m} U_{i} V_{j} + \frac{1}{\varepsilon} \int_{\Omega} \widehat{\beta_{ji}^{n}} \nabla_{n} V_{j} U_{i} + \frac{1}{\varepsilon^{2}} \int_{\Omega} \widehat{\gamma_{ij}} U_{i} V_{j}\\
        = &\int_{\Omega} f(U_{i}) V_{j}.
    \end{aligned}
\end{equation}
Let
\begin{equation*}
    \begin{aligned}
        M_{ij} &= \int_{\Omega} \widehat{m_{ij}} \phi_{i} \phi_{j}, \\
        A_{ij} &= \int_{\Omega} \widehat{\alpha _{ij}^{mn}} \nabla _{m} \phi_{i}^{m} \nabla _{n} \phi_{j}^{n} + \frac{1}{\varepsilon} \widehat{\beta _{ij}^{m}} \nabla _{m} \phi_{i}^{m} \phi_{j} + \frac{1}{\varepsilon} \widehat{\beta _{ji}^{n}} \nabla _{n} \phi_{j}^{n}  \phi_{i} + \frac{1}{\varepsilon ^{2}} \widehat{\gamma _{ij}} \phi_{i} \phi_{j},
    \end{aligned}
\end{equation*}
The matrix form of \eqref{weak_homo_form_time} is
\begin{equation}\label{matrix_form}
    M \, _{0}^{C} D_{t}^{\alpha} U(t) + AU(t) = f(U(t)),
\end{equation}
where \(M = (M_{ij}), \ A = (A_{ij})\), which are symmetric and positive definite.

\subsection{Convergence of multi-continuum upscaling model}


In this subsection, we present the convergence of the semi-discretized problem. Firstly, we define the nonlocal multicontinuum (NLMC) downscaling operators.
\begin{equation}\label{nlmc_downscaling_operator}
    \begin{aligned}
        P_{h,H} \colon \bigl( C^{1}(\Omega) \bigr) ^{2} &\to H^{1}(\mathcal{T}_{H} (z)),\\
        (U_{0},U_{1}) &\mapsto \sum\limits_{i=0,1} \sum\limits_{x_{l} \in I_{H}} \chi_{K_{H}(x_{l})} \bigl( \phi_{i} U_{i}(x_{l}) + \phi_{i}^{m} \nabla_{m} U_{i}(x_{l}) \bigr),
    \end{aligned}
\end{equation}
where \( H^{1}(\mathcal{T}_{H} (z)) \coloneqq \bigl\{ \, u \in L^{2}(\Omega) \mid u| _{K} \in H^{1}(K), \ \forall\,K \subset \Omega \,\bigr\} \). We also define another downscaling operator on the space formed by CEM-GMsFEMs' global basis as follows.
\begin{equation}\label{cem_global_downscaling_operator}
    \begin{aligned}
        P_{h} \colon \bigl( C^{1}(\Omega) \bigr) ^{2} &\to H^{1}(\Omega),\\
        (U_{0},U_{1}) &\mapsto \sum\limits_{i=0,1} \sum\limits_{x_{l} \in I_{h}} U_{i}(x_{l}) \varphi_{l,i}^{z}.
    \end{aligned}
\end{equation}

Define the \(L^{2}\) projection operator \(\Pi _{i,h}\) on the auxiliary space \(V_{\mathrm{aux},i} = \mathop{\mathrm{span}} _{l} \bigl\{ \psi_{l,i} \bigr\}\) as
\begin{equation}
    \Pi _{i,h} (v) = \sum\limits_{x_{i} \in I_{z,h}} \frac{\int_{K_{z,h}(x_{l}) \cap \Omega_{i}} v}{\int_{K_{z,h}(x_{l}) \cap \Omega_{i}} 1} \psi_{l,i}^{z} \ i=0,1 .
\end{equation}
The \(L^{2}\) projection of the exact solution \(u\) might lack smoothness, we assume that \(\Pi _{i,h} (u)\) can be extended to a function that belongs to \(C^{1} (\Omega)\) with \(h \to 0\), and use the notation \(\Pi _{i,h} (u)\) to represent a smooth function thereafter. Then, we have \(P_{h} \bigl( \Pi _{0,h} u, \Pi _{1,h} u \bigr) = u_{glo,h}\). Notice that the local average of the summation on the domain \(\Omega\) with respect to the point \(z\) can be represented as follows: for any \(g \in C ^{1} (\Omega)\), we have
\begin{equation*}\label{local_average}
    \int_{[0,H]^{d}} \sum\limits_{x_{l} \in I_{z,H}} g(x_{l})  = \int_{\Omega} g(x)\,\mathrm{d}x.
\end{equation*}

Then a bilinear operator \(\widetilde{a}_{glo,h} \colon \bigl( C^{1}(\Omega) \bigr) ^{2} \times \bigl( C^{1}(\Omega) \bigr) ^{2} \to \mathbb{R}\) can be defined by
\begin{equation*}
    \widetilde{a}_{glo,h} ((U_{0},U_{1}),(V_{0},V_{1})) \coloneqq a_{\varepsilon} \bigl( P_{h} ( U_{0},U_{1} ), P_{h}(V_{0},V_{1}) \bigr).
\end{equation*}
By Lemma \refeq{cem_global_basis_error_bound}, we obtain \(\forall\,v \in H_{0}^{1}(\Omega)\),
\begin{equation}
        \widetilde{a}_{glo,h} \bigl( ( \Pi _{0,h} u, \Pi _{1,h} u ),  ( \Pi _{0,h} v,\Pi _{1,h} v ) \bigr) = \bigl( ( \Pi _{0,h} f(u),\Pi _{1,h} f(u) ), ( \Pi _{0,h} v,\Pi _{1,h} v ) \bigr).
\end{equation}
Furthermore, a homogenized bilinear operator \(\widetilde{a}_{h,H} \colon \bigl( H^{1}(\Omega) \bigr) ^{2} \times \bigl( H^{1}(\Omega) \bigr) ^{2} \to \mathbb{R} \) can be defined as
\begin{equation}
    \begin{aligned}
        &\widetilde{a}_{h,H} ((U_{0},U_{1}),( V_{0},V_{1})) \\
        \coloneqq &\int_{\Omega} \frac{1}{H^{d}} \int_{K_{H}(x)} \kappa  (y) \nabla _{y} \sum\limits_{i} \bigl(U_{i}(x) \phi_{i}(x,y) + \nabla _{m} U_{i}(x) \phi_{i}^{m}(x,y) \bigr) \cdot \\
        &\sum\limits_{j}\nabla _{y} \bigl(V_{i}(x) \phi_{i}(x,y) + \nabla _{m} V_{i}(x) \phi_{i}^{m}(x,y) \bigr) \, \mathrm{d}y \,\mathrm{d}x\\
        = &\int_{\Omega} \sum\limits_{i,j} \bigl( \widehat{\alpha_{ij}^{mn}} \nabla _{m} U_{i} \nabla _{n} V_{j} + \widehat{\beta_{ij}^{m}} \nabla _{m} U_{i} V_{j} + \widehat{\beta_{ij}^{n}} U_{i} \nabla _{n}V_{j} + \widehat{\gamma_{ij}} U_{i} V_{j} \bigr).
    \end{aligned}
\end{equation}
The following Lemma describes the property of the homogenized operator \cite{leung2024some}.
\begin{lemma}
    The operator \(\widetilde{a}_{h,H} \) is an averaging of the multiscsale operator \(a_{\varepsilon}\) in sense of 
    \begin{equation}
        \widetilde{a} _{h,H} ((U_{0},U_{1}),( V_{0},V_{1})) = \frac{1}{H^{d}} \int_{[0,H]^{d}} a_{\varepsilon} (P_{h,H} (U_{0},U_{1}), P_{h,H} (V_{0},V_{1}) ) . 
    \end{equation}
\end{lemma}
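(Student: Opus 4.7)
The plan is to unfold both sides using the definitions of $a_\varepsilon$ and $P_{h,H}$, reduce the right-hand side to a single integral over $\Omega$ by invoking the shift-averaging identity stated just above the lemma, and then match the resulting expression termwise with $\widetilde{a}_{h,H}$.

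First I would write out $P_{h,H}(U_0,U_1)$ on a fixed coarse partition $\mathcal{T}_H(z)$. Because of the indicator factors $\chi_{K_H(x_l)}$, on each coarse block $K_H(x_l)$ the function $P_{h,H}(U_0,U_1)$ equals $\sum_{i}\bigl(\phi_i(x_l,y)\,U_i(x_l) + \phi_i^m(x_l,y)\,\nabla_m U_i(x_l)\bigr)$, and the jump across coarse-block boundaries does not enter $a_\varepsilon$ because the bilinear form has already been extended blockwise as $a_\varepsilon(u,v)=\sum_i \int_{\omega_i}\kappa\,\nabla u|_{\omega_i}\cdot\nabla v|_{\omega_i}$. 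Thus, for each shift $z$,
\begin{equation*}
a_\varepsilon\bigl(P_{h,H}(U_0,U_1),P_{h,H}(V_0,V_1)\bigr) = \sum_{x_l \in I_{z,H}} g(x_l),
\end{equation*}
where I set
\begin{equation*}
g(x) \coloneqq \int_{K_H(x)} \kappa(y)\,\nabla_y\!\sum_i\!\bigl(\phi_i(x,y)U_i(x)+\phi_i^m(x,y)\nabla_m U_i(x)\bigr)\cdot \nabla_y\!\sum_j\!\bigl(\phi_j(x,y)V_j(x)+\phi_j^n(x,y)\nabla_n V_j(x)\bigr)\mathrm{d}y.
\end{equation*}

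Next I would integrate over the shift $z \in [0,H]^d$, divide by $H^d$, and apply the identity
$\int_{[0,H]^d}\sum_{x_l\in I_{z,H}} g(x_l)\,\mathrm{d}z=\int_\Omega g(x)\,\mathrm{d}x$ recalled just before the lemma. This converts the sum-over-centers into an integral over $\Omega$ and yields
\begin{equation*}
\frac{1}{H^d}\int_{[0,H]^d} a_\varepsilon\bigl(P_{h,H}(U_0,U_1),P_{h,H}(V_0,V_1)\bigr)\,\mathrm{d}z = \frac{1}{H^d}\int_\Omega g(x)\,\mathrm{d}x,
\end{equation*}
which is exactly the first line of the definition of $\widetilde{a}_{h,H}\bigl((U_0,U_1),(V_0,V_1)\bigr)$. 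Matching it with the expanded form in the second line of the definition is then a termwise identification using the notations $\widehat{\alpha_{ij}^{mn}}$, $\widehat{\beta_{ij}^{m}}$, $\widehat{\gamma_{ij}}$, with the rescalings $\varepsilon$, $\varepsilon^2$ absorbed into these quantities exactly as declared.

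The only real subtlety I anticipate is justifying the shift-averaging step. For the identity to apply cleanly, $g$ must be well-defined and integrable as a function of $x\in\Omega$, which requires: (i) that the cell problems \eqref{average_cellproblems}--\eqref{gradienteffect_cellproblems} can indeed be set up for every $x$, not merely at the coarse centers, so that $\phi_i(x,\cdot)$ and $\phi_i^m(x,\cdot)$ vary smoothly with the base point $x$; (ii) boundary/truncation adjustments near $\partial\Omega$ where some shifted blocks $K_H(x_l)$ are only partially contained in $\Omega$, so that the sum $\sum_{x_l\in I_{z,H}} g(x_l)$ matches $a_\varepsilon$ on the full partition. Both are standard in the multicontinuum framework of \cite{efendiev2023multicontinuum, leung2024some} and are assumed implicitly in the statement; I would note these points but not develop them further, as the averaging identity and the blockwise additivity of $a_\varepsilon$ do the actual work.
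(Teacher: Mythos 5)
Your proposal is correct. The paper itself gives no proof of this lemma --- it is quoted from \cite{leung2024some} --- but your argument is exactly the intended one: the blockwise extension of $a_{\varepsilon}$ together with the indicator structure of $P_{h,H}$ reduces $a_{\varepsilon}\bigl(P_{h,H}(U_0,U_1),P_{h,H}(V_0,V_1)\bigr)$ to $\sum_{x_l\in I_{z,H}} g(x_l)$, and the shift-averaging identity stated just before the lemma turns $\frac{1}{H^d}\int_{[0,H]^d}\sum_{x_l\in I_{z,H}} g(x_l)\,\mathrm{d}z$ into $\frac{1}{H^d}\int_{\Omega} g(x)\,\mathrm{d}x$, which is literally the first line of the definition of $\widetilde{a}_{h,H}$ (so no further termwise matching is even needed). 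Your flagged caveats --- that the cell functions $\phi_i(x,\cdot),\phi_i^m(x,\cdot)$ must be defined for every base point $x$ and that partially covered blocks near $\partial\Omega$ need the truncation $K_H(x_i)=\bigl(x_i+(-\tfrac{H}{2},\tfrac{H}{2})^d\bigr)\cap\Omega$ built into the partition --- are the right ones and are indeed implicit in the framework.
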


We consider \(  \bigl( U_{0,h,H}(t),U_{1,h,H}(t) \bigr) \in \bigl( H^{1}(\Omega) \bigr)^{2} \) to be the solution of the following upscaled problem satisfying
\begin{equation}\label{upscaled_problems}
    \begin{aligned}
        &\frac{1}{H^{d}} \int_{[0,H]^{d}} \biggl( \,_{0}^{C} D_{t}^{\alpha} P_{h,H}( U_{0,h,H}(t) , U_{1,h,H}(t) ) , P_{h,H}( V_{0},V_{1} ) \biggr) \\
        + &\widetilde{a}_{h,H} \bigl( (U_{0,h,H}(t),U_{1,h,H}(t)), (V_{0},V_{1}) \bigr) \\
        = &\frac{1}{H^{d}} \int_{[0,H]^{d}} \biggl( P_{h,H} \bigl(  f( U_{0,h,H}(t)), f ( U_{1,h,H}(t)) \bigr), P_{h,H} (V_{0},V_{1}) \biggr) \quad \forall\,(V_{0},V_{1}) \in \bigl(H^{1}(\Omega)\bigr)^{2}.
    \end{aligned}
\end{equation}
The following Lemma demonstrates the approximation of the two downscaling operators with the scale \(h \ll H\) if \(U_{0}, U_{1}\) is smooth sufficiently. We notice that the global basis \(\varphi_{l,i}^{z}\) shows the exponential decaying property \cite{chung2018constraint}.
\begin{lemma}\label{downscaling_operator_approximation}
    Given \(U_{0}, U_{1} \in H^{1+s}(\Omega)\) with \(0 < s\leq 1\). If the number of the oversampling layer \(k\) satisfies \(2kh < H  = O(kh)\).  For some constant \(C_{0} \geq k  = O(\log(H^{-1}))\), we have \cite{leung2024some}
    \begin{equation}
        \bigl\lVert P_{h,H}(U_{0}(t),U_{1}(t)) - P_{h}(U_{0}(t),U_{1}(t) ) \bigr\rVert  \leq C \log (\frac{1}{H}) H^{1+s} \bigl\lVert (U_{0}(t),U_{1}(t)) \bigr\rVert.
    \end{equation}
\end{lemma}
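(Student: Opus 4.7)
The plan is to compare the two downscaling operators block by block, exploiting the exponential decay of the CEM-GMsFEM global basis $\varphi_{l,i}^z$ from \cite{chung2018constraint} to identify its restriction to each oversampling patch $K_H^+(x_l)$ with the local cell problem solutions $\phi_i, \phi_i^m$ up to an error that is $O(H^p)$ for any $p$, and then to control the resulting first-order Taylor remainder via the $H^{1+s}$-regularity of $U_0, U_1$.

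First I would write $P_h(U_0,U_1) - P_{h,H}(U_0,U_1)$ as a sum of contributions localized to coarse blocks $K_H(x_l) \in \mathcal{T}_H(z)$. On each such block, $P_{h,H}$ contributes the multicontinuum expansion $\sum_i (\phi_i U_i(x_l) + \phi_i^m \nabla_m U_i(x_l))$, while the restriction of $P_h$ to the same block is a truncation of the global sum $\sum_{i,x_k} U_i(x_k)\varphi_{k,i}^z$. I would then Taylor expand each coefficient $U_i(x_k)$ about $x_l$ to first order, splitting the $P_h$ contribution into a constant part proportional to $U_i(x_l)$, a linear part proportional to $\nabla_m U_i(x_l)$, and a Sobolev--Taylor remainder of size $O(H^{1+s})$ controlled by $\|U_i\|_{H^{1+s}(K_H^+(x_l))}$.

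Next I would invoke the exponential-decay estimate for $\varphi_{l,i}^z$ with oversampling parameter $k = C_0 \log(H^{-1})$: this ensures that a localized version of any linear combination of global basis functions agrees with the true combination up to $O(H^p)$ for any prescribed $p$. Because the localized combinations corresponding to the constant and linear Taylor coefficients satisfy the same constrained energy-minimization problems \eqref{average_cellproblems} and \eqref{gradienteffect_cellproblems} that uniquely characterize $\phi_i$ and $\phi_i^m$, uniqueness of those minimizers identifies these two pieces of $P_h$ with $\phi_i U_i(x_l)$ and $\phi_i^m \nabla_m U_i(x_l)$ respectively, modulo the exponentially small correction. Summing the block-wise estimates by Cauchy--Schwarz and observing that each point of $\Omega$ lies in $O(\log(1/H))$ overlapping oversampling patches (since the patches have width $k h = O(H \log H^{-1})$) produces the $\log(H^{-1})$ prefactor in the bound.

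The principal obstacle is the constraint-matching step: one must express the first-moment constraint $\int_{K_H^+(x)} (y^{(m)} - c_x^{(m)})\psi_k(x_j,\cdot)$ appearing in \eqref{gradienteffect_cellproblems} as an appropriate linear combination of the pointwise average constraints that define the global basis $\varphi_{k,i}^z$, and analogously for the constant-average constraint \eqref{average_cellproblems}. Once this algebraic translation is established and the linear combination $\sum_{x_k \in I_h}(x_k - x_l)\varphi_{k,i}^z$ is shown to satisfy (in the localized sense) the defining problem of $\phi_i^m$, the exponential-decay estimate combined with the Taylor remainder summed over coarse blocks yields the stated $\log(H^{-1}) H^{1+s}$ rate.
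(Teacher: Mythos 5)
The paper does not actually prove this lemma: it is imported verbatim from \cite{leung2024some}, so there is no in-paper argument to compare yours against. Your sketch does follow the route taken in that reference and its predecessors (block-wise decomposition, first-order Sobolev--Taylor expansion of the nodal values $U_i(x_k)$ about the coarse centers, identification of the resulting constant and linear combinations of global basis functions with the cell-problem solutions $\phi_i$ and $\phi_i^m$, and exponential-decay localization), so the overall architecture is right. But two points need attention before this could count as a proof rather than a plan.

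First, the step you yourself call ``the principal obstacle''--- rewriting the first-moment constraint $\int_{K_H^+(x)}(y^{(m)}-c_x^{(m)})\psi_k(x_j,\cdot)$ as a linear combination of the pointwise-average constraints defining $\varphi_{k,i}^z$, and showing that $\sum_{x_k}(x_k-x_l)^{(m)}\varphi_{k,i}^z$ solves (in the localized sense) the defining problem of $\phi_i^m$ --- is precisely where all the work lies, and you defer it entirely. In particular you must check that the Lagrange-multiplier structures of the two constrained minimizations are compatible (the cell problems \eqref{average_cellproblems}--\eqref{gradienteffect_cellproblems} carry the same right-hand side $\sum d_{jk}(\psi_k,v)$ but different constraint data, whereas the global basis problems have one multiplier per fine cell), and that the mismatch between $c_x^{(m)}$ and the fine-cell centroids contributes only at order $h \le H$. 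Second, your explanation of the $\log(1/H)$ prefactor is not consistent with the stated scaling: under $2kh < H = O(kh)$ the oversampled patches $K_H^+(x_l)$ have diameter $O(H)$, so each point of $\Omega$ lies in only $O(1)$ of them, not $O(\log(1/H))$ of them. The logarithm instead enters through the choice $k = O(\log(H^{-1}))$ itself --- the number of fine layers (equivalently, the number of auxiliary constraints, or the energy of the cutoff functions used in the decay argument) grows like $k$, and this factor survives in front of the estimate after the exponentially small tail $E^k$ is absorbed into $H^{1+s}$. As written, your overlap-counting argument would produce no logarithm at all, so the bound you would actually obtain does not match the one claimed.
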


The relationship between the solution \(u\) and the \(L^{2}\) projection \( P_{h}(\Pi _{i,h} u)\) can be described as
\begin{equation}\label{a_hH_Galerkin}
    a_{\varepsilon} \bigl( u(t) - \frac{1}{H^{d}} \int_{ [0,H]^{d}} P_{h} (\Pi _{0,h} u(t), \Pi _{1,h} u(t) ) , v \bigr) = 0  \quad  \forall\,v \in V_{glo,h}.
\end{equation}

Moreover, for the time fractional derivative, we have the following Lemma.
\begin{lemma}\label{time_fractional_lemma}
    For any function \(v(t)\) absolutely continuous on \([0,T]\), we have \cite{yang2024accurate}
    \begin{equation}
        v(t) \, _{0}^{C} D _{t}^{\alpha} v(t) \geq \frac{1}{2} \, \,_{0}^{C} D _{t}^{\alpha} v^{2}(t) \quad  0 < \alpha < 1.
    \end{equation}
\end{lemma}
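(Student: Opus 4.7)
The plan is to directly compute the quantity $v(t)\,^{C}_{0}D_{t}^{\alpha}v(t) - \tfrac12\,^{C}_{0}D_{t}^{\alpha}v^{2}(t)$ from the definition of the Caputo derivative and rewrite it as a manifestly nonnegative expression. First I would expand both Caputo derivatives and combine them under a single integral. Using $(v^{2})'(s) = 2v(s)v'(s)$ and bringing the $s$-independent factor $v(t)$ inside, the two terms differ only in their numerators, so one obtains
\begin{equation*}
v(t)\,^{C}_{0}D_{t}^{\alpha}v(t) - \tfrac12\,^{C}_{0}D_{t}^{\alpha}v^{2}(t) = \frac{1}{\Gamma(1-\alpha)} \int_{0}^{t} \frac{(v(t)-v(s))\,v'(s)}{(t-s)^{\alpha}}\,\mathrm{d}s.
\end{equation*}

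Next, I would introduce the auxiliary function $w(s) := v(t) - v(s)$, for which $w'(s) = -v'(s)$ and $(v(t)-v(s))v'(s) = -w(s)w'(s) = -\tfrac12 (w^{2})'(s)$. A single integration by parts in $s$ then yields
\begin{equation*}
\int_{0}^{t} \frac{(v(t)-v(s))v'(s)}{(t-s)^{\alpha}}\,\mathrm{d}s = \frac{(v(t)-v(0))^{2}}{2\,t^{\alpha}} + \frac{\alpha}{2} \int_{0}^{t} \frac{(v(t)-v(s))^{2}}{(t-s)^{\alpha+1}}\,\mathrm{d}s,
\end{equation*}
where the boundary contribution at $s=t$ drops out precisely because $w(t)=v(t)-v(t)=0$. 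Since both terms on the right are nonnegative and $\Gamma(1-\alpha)>0$ for $\alpha\in(0,1)$, combining the two displays delivers the claim.

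The main technical obstacle is justifying the integration by parts and the vanishing of the endpoint contribution at $s=t$ when $v$ is only absolutely continuous, since one needs $(v(t)-v(s))^{2}/(t-s)^{\alpha}\to 0$ as $s\to t^{-}$, and this is not immediate from absolute continuity alone. A clean route is to first prove the identity for $v \in C^{1}([0,T])$, where every manipulation is classical, and then extend to absolutely continuous $v$ by approximating $v'$ in $L^{1}(0,T)$ by smooth functions, using the fact that both sides depend continuously on $v$ via the weakly singular convolution with $(t-s)^{-\alpha}$. Alternatively, one can regularize the kernel as $(t-s+\eta)^{-\alpha}$, perform the integration by parts for $\eta>0$ where everything is classical, and then send $\eta\to 0^{+}$ by monotone/dominated convergence, noting that $v(t)-v(s)=\int_{s}^{t}v'(r)\,\mathrm{d}r\to 0$ as $s\to t^{-}$ for any $v'\in L^{1}$.
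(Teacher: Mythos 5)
Your argument is correct, and it is essentially the classical Alikhanov-type proof of this inequality; the paper itself offers no proof at all (the lemma is simply quoted from the cited reference), so your derivation is genuinely self-contained where the paper is not. The key identity
\begin{equation*}
v(t)\,_{0}^{C}D_{t}^{\alpha}v(t)-\tfrac12\,_{0}^{C}D_{t}^{\alpha}v^{2}(t)
=\frac{1}{\Gamma(1-\alpha)}\Bigl(\frac{(v(t)-v(0))^{2}}{2t^{\alpha}}
+\frac{\alpha}{2}\int_{0}^{t}\frac{(v(t)-v(s))^{2}}{(t-s)^{\alpha+1}}\,\mathrm{d}s\Bigr)
\end{equation*}
is exactly the right way to exhibit nonnegativity, and you are right to single out the boundary term at $s=t$ as the only delicate point: for a general absolutely continuous $v$ one can have $(v(t)-v(s))^{2}/(t-s)^{\alpha}\not\to 0$ (take $v(s)\sim(t-s)^{\beta}$ with $0<\beta<\alpha/2$), but in precisely those cases the Caputo derivative at that $t$ fails to exist as an absolutely convergent integral, so the inequality is only asserted where both sides are defined. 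Your two proposed fixes (prove the identity for $C^{1}$ functions and pass to the limit via the weakly singular convolution, or regularize the kernel to $(t-s+\eta)^{-\alpha}$ and let $\eta\to 0^{+}$) are both standard and adequate; either one closes the gap. The only refinement worth adding is to state explicitly that the conclusion holds for almost every $t$, namely at those $t$ where $\,_{0}^{C}D_{t}^{\alpha}v(t)$ and $\,_{0}^{C}D_{t}^{\alpha}v^{2}(t)$ exist, which is how the lemma is used in the paper's energy estimates anyway.
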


With the approximation of the two downscaling operators and by the assumption of the smoothness of \(U_{0,h,H}\) and \(U_{1,h,H}\), we derive the convergence of semi-discretize problem as following theorem based on above lemmas.
\begin{theorem}\label{space_approximation}
    Let \(u\) be the solution to \eqref{e1}. With the same condition in Lemma \refeq{downscaling_operator_approximation}, we have following estimates
    \begin{equation}
         \max\limits_{0\leq t\leq T}   \bigl\lVert u(t) - \frac{1}{H^{d}} \int_{ [0,H]^{d} }  P_{h,H} \bigl(  U_{0,h,H}(t), U_{1,h,H}(t) \bigr) \bigr\rVert \leq C h^{2} .
    \end{equation}
\end{theorem}

\begin{proof}
    We estimate the error by three parts.
    \begin{equation*}
        \begin{aligned}
            &\bigl\lVert u(t) - \frac{1}{H^{d}} \int_{[0,H] ^{d}} P_{h,H} \bigl(  U_{0,h,H}(t) , U_{1,h,H}(t)  \bigr) \bigr\rVert\\
            \leq &\bigl\lVert u(t) - \frac{1}{H^{d}} \int_{[0,H] ^{d}} P_{h} ( \Pi _{0,h} u(t) , \Pi _{1,h} u(t)  )\bigr\rVert\\
            &+ \bigl\lVert \frac{1}{H^{d}} \int_{[0,H] ^{d}} ( P_{h,H} - P_{h} ) ( \Pi _{0,h} u(t) , \Pi _{1,h} u(t)  ) \bigr\rVert\\
            &+ \bigl\lVert \frac{1}{H^{d}} \int_{[0,H] ^{d}} P_{h,H} \bigl( ( \Pi _{0,h} u(t) , \Pi _{1,h} u(t) ) - ( U_{0,h,H}(t) , U_{1,h,H}(t) ) \bigr) \bigr\rVert\\\
            = &\bigl\lVert e_{1}(t) \bigr\rVert + \bigl\lVert e_{2}(t) \bigr\rVert + \bigl\lVert e_{3}(t) \bigr\rVert,
        \end{aligned}
    \end{equation*}
    where \(e_{1}(t)\) represents the error between the exact solution and its projection on global CEM-GMsFEM space, \(e_{2}\) measures the difference between two downscaling operators. We have already obtained these estimates in Lemma \refeq{cem_global_basis_error_bound}. We will then focus on estimating the remaining error \(e_{3}(t)\). By upscaled problems \eqref{upscaled_problems}, we have
    \begin{equation*}
        \begin{aligned}
            &\frac{1}{H^{d}} \int_{[0,H]^{d}} \bigl( \,_{0}^{C} D _{t}^{\alpha} e_{3}(t) , P_{h,H} (V_{0} , V_{1}) \bigr) + \frac{1}{H^{d}} \int_{[0,H]^{d}} a_{\varepsilon} \bigl( e_{3}(t) , P_{h,H} (V_{0} , V_{1}) \bigr)\\
            = &\frac{1}{H^{d}} \int_{[0,H]^{d}} \bigl( \,_{0}^{C} D _{t}^{\alpha} e_{2}(t) , P_{h,H} (V_{0} , V_{1}) \bigr) + \frac{1}{H^{d}} \int_{[0,H]^{d}} a_{\varepsilon} \bigl( e_{2}(t) , P_{h,H} (V_{0} , V_{1}) \bigr)\\
            &+ \frac{1}{H^{d}} \int_{[0,H]^{d}} \bigl( \,_{0}^{C} D _{t}^{\alpha} (  P_{h}  ( \Pi _{0,h} u(t) , \Pi _{1,h} u(t) ) -  P_{h,H} (U_{0,h,H}(t) , U_{1,h,H}(t)) ) , P_{h,H} (V_{0} , V_{1}) \bigr)\\
            &+ \frac{1}{H^{d}} \int_{[0,H]^{d}} a_{\varepsilon} \bigl( ( P_{h} ( \Pi _{0,h} u(t) ,\Pi _{1,h} u(t)  ) - P_{h,H} (U_{0,h,H}(t) , U_{1,h,H}(t) ) ) , P_{h,H} (V_{0} , V_{1}) \bigr) .\\
        \end{aligned}
    \end{equation*}
    According to the definition of downscaling operators \eqref{nlmc_downscaling_operator}, the weak form \eqref{weak_form} and \eqref{a_hH_Galerkin}, we have
    \begin{equation*}
        \begin{aligned}
            &\frac{1}{H^{d}} \int_{[0,H]^{d}} \bigl( \,_{0}^{C} D _{t}^{\alpha} (  P_{h}  ( \Pi _{0,h} u(t) , \Pi _{1,h} u(t) ) -  P_{h,H} (U_{0,h,H}(t) , U_{1,h,H}(t) ) ) , P_{h,H} (V_{0} , V_{1}) \bigr)\\
            &+ \frac{1}{H^{d}} \int_{[0,H]^{d}} a_{\varepsilon} \bigl( ( P_{h} (  \Pi _{0,h} u(t) ,\Pi _{1,h} u(t)  )- P_{h,H} (U_{0,h,H}(t) , U_{1,h,H}(t) ) ) , P_{h,H} (V_{0} , V_{1}) \bigr)\\
            = &\frac{1}{H^{d}} \int_{[0,H]^{d}} \bigl(  \,_{0}^{C} D _{t}^{\alpha} P_{h}  ( \Pi _{0,h} u(t) , \Pi _{1,h} u(t) ) , ( P_{h,H} - P_{h}) (V_{0},V_{1}) \bigr)\\
        \end{aligned}
    \end{equation*}
    \begin{equation*}
        \begin{aligned}
            &+ \frac{1}{H^{d}} \int_{[0,H]^{d}} a_{\varepsilon} \bigl( P_{h}  ( \Pi _{0,h} u(t) , \Pi _{1,h} u(t) ) ,P_{h,H} (V_{0},V_{1}) \bigr) - a_{\varepsilon} ( u(t),\frac{1}{H^{d}} \int_{[0,H]^{d}} P_{h}(V_{0},V_{1}) )\\
            &+ \frac{1}{H^{d}} \int_{[0,H]^{d}} \bigl( f(P_{h}(\Pi _{0,h} u(t) , \Pi _{1,h} u(t) ) ) , P_{h}(V_{0},V_{1})  \bigr) \\
            &- \frac{1}{H^{d}} \int_{[0,H]^{d}} \bigl( f(P_{h,H}(\Pi _{0,h} u(t) , \Pi _{1,h} u(t) ) ) , P_{h,H}(V_{0},V_{1})  \bigr)\\
        \end{aligned}
    \end{equation*}
    Take \((V_{0} , V_{1}) = ( \Pi _{0,h} u(t) , \Pi _{1,h} u(t) ) - ( U_{0,h,H}(t) , U_{1,h,H}(t) )\), we obtain that
    \begin{equation*}
        \begin{aligned}
            &- \frac{1}{H^{d}} \int_{[0,H]^{d}} \bigl(   f(u(t)) -  f(P_{h}(\Pi _{0,h} u(t) , \Pi _{1,h} u(t)) ) , e_{3}(t)  \bigr)\\
            = &- \bigl( \,_{0}^{C} D _{t}^{\alpha} e_{1}(t) , e_{3}(t) \bigr) \\
            &+ \bigl(  \,_{0}^{C} D _{t}^{\alpha} e_{1}(t) , \frac{1}{H^{d}} \int_{[0,H]^{d}} ( P_{h,H} - P_{h}) (( \Pi _{0,h} u(t) , \Pi _{1,h} u(t) ) - ( U_{0,h,H}(t) , U_{1,h,H}(t) )) \bigr)\\
            &+ \frac{1}{H^{d}} \int_{[0,H]^{d}} \bigl(   f(P_{h,H}(\Pi _{0,h} u(t) , \Pi _{1,h} u(t)) ) - f(P_{h,H}(U_{0,h,H}(t) , U_{1,h,H}(t)) ) , e_{3}(t)   \bigr)\\
            &- \frac{1}{H^{d}} \int_{[0,H]^{d}} \biggl(   f(u(t)) -  f(P_{h}(\Pi _{0,h} u(t) , \Pi _{1,h} u(t)) ) , \\
            &\ ( P_{h,H} - P_{h}) (( \Pi _{0,h} u(t) , \Pi _{1,h} u(t) ) - ( U_{0,h,H}(t) , U_{1,h,H}(t) ))  \biggr)\\
        \end{aligned}
    \end{equation*}
    By Young's inequality and Lemma \refeq{downscaling_operator_approximation}
    \begin{equation*}
        \begin{aligned}
            &\frac{1}{2} \,_{0}^{C} D _{t}^{\alpha} \bigl\lVert e_{3}(t) \bigr\rVert ^{2} + \bigl\lVert e_{3}(t) \bigr\rVert _{a}^{2}\\
            \leq & L \bigl\lVert e_{1}(t) \bigr\rVert \bigl\lVert e_{3}(t) \bigr\rVert + L \bigl\lVert e_{3}(t) \bigr\rVert ^{2} + CL \log (\frac{1}{H}) H^{1+s} \bigl\lVert e_{1}(t) \bigr\rVert \cdot \bigl\lVert e_{3}(t) \bigr\rVert\\
            \leq & \bigl\lVert e_{3}(t) \bigr\rVert ^{2} + C \bigl\lVert e_{1}(t) \bigr\rVert ^{2} + C \bigl( \log (\frac{1}{H}) \bigr)^{2} H^{2+2s} \bigl\lVert e_{1}(t) \bigr\rVert ^{2},
        \end{aligned}
    \end{equation*}
    We apply the Poincar\'e inequality and the fractional Gr\"onwall's inequality \cite{jin2018numerical}, then conduct the Riemann-Liouville fractional integration and obtain
    \begin{equation*}
        \max\limits_{0\leq t\leq T}\bigl\lVert e_{3}(t) \bigr\rVert \leq C h^{2}.
    \end{equation*}
    This completes the proof.

\end{proof}

\section{Time discretization}\label{Time_discretization}

In this section, we focus on the approximation of the time fractional derivative \( ^{C}_{0}{D}_{t}^{\alpha} U(t) \). We divide the time interval \( [0,T] \) into \(N\) equidistant parts and denote \(t_{n} = n \tau, \ n = 0,1,\cdots,N \), where \(\tau = \frac{T}{N}\) is the time step size. One of the common way to approximate the time fractional derivative \(  ^{C}_{0}{D}_{t}^{\alpha} U(t)  \) is the L1 scheme \cite{lin2007finite}
\begin{equation}
    ^{C}_{0}{D}_{t}^{\alpha} U(t_{n+1}) = \frac{1}{\Gamma (2- \alpha )}  \sum\limits_{j=0}^{n} \frac{ U(t_{n+1-j}) - U(t_{n-j}) }{\tau ^{\alpha}} b_{j}, \ n = 0,1,\cdots,N ,
\end{equation}
where \(b_{j} = (j+1)^{1 - \alpha} - j^{1 - \alpha}, j=0,1,\cdots,N\). For all \(n = 0,1,\cdots,N \), the variational formulation of the \eqref{upscaled_problems} reads: find \(U^{n+1} = ( U_{0}^{n+1} , U_{1}^{n+1} ) \in \bigl( H^{1}(\Omega) \times H^{1}(\Omega) \bigr) ^{2}\), such that
\begin{equation}\label{fully_discrete_weak_form}
    \begin{aligned}
        (U^{n+1} , V)_{H} + \alpha _{0} \widetilde{a} _{h,H} (U^{n+1},V) = &(1-b_{1}) (U^{n} , V)_{H} + \sum\limits_{j=1}^{n-1}(b_{j} - b_{j+1}) (U^{n-j} , V)_{H} \\
        &+ b_{n} (U^{0} , V)_{H}  + \alpha _{0} (f(U^{n+1}) , V)_{H},
    \end{aligned}
\end{equation}
for all \(V = (V_{0},V_{1}) \in \bigl(H^{1}(\Omega)\bigr)^{2}\), where \(\alpha _{0} = \Gamma (2-\alpha) \tau^{\alpha}\) and \((\cdot,\cdot)_{H}\) is defined as
\begin{equation*}
    (U,V)_{H} \coloneqq \frac{1}{H^{d}} \int_{[0,H]^{d}} \bigl( (U_{0},U_{1}) , (V_{0},V_{1}) \bigr).
\end{equation*}
\(U^{n} \) is the approximation of \(U(t_{n})\). We could derive the matrix form \eqref{matrix_form} based on the above discrete variational formulation \eqref{fully_discrete_weak_form}
\begin{equation}\label{L1_fully_discretized}
    (M + \alpha_{0} A) U^{n+1} = \alpha_{0} f(U^{n+1}) + (1-b_{1})M U^{n} + M \sum\limits_{j=1}^{n-1} (b_{j} - b_{j+1}) U^{n-j} + b_{n} M U^{0}.
\end{equation}
The upscaled stiffness matrix \(A\) explains the homogenized bilinear operator \(\widetilde{a}_{h,H} (\cdot,\cdot) \) while the upscaled mass matrix \(M\) represents the homogenized inner product \((\cdot,\cdot)_{H}\).

\subsection{Exponential Integrator}

Instead of classic L1 scheme, we introduce the exponential integrator approach in this work. Exponential integrator method has shown to be stable for stiff problems \cite{garrappa2013exponential} and more efficient for semilinear problems. Thus is suitable for our semi-discretized equation \eqref{weak_homo_form_time}. We rewrite the semilinear problem \eqref{matrix_form} as \(^{C}_{0}{D}_{t}^{\alpha} U (t) + K U(t) = F(U(t))\) where \(K = M^{-1}A\) and \(F(U(t)) = M^{-1} f(t) \), and let \(\widehat{f}\) be the Laplace transform of a function \(f\) 
\begin{equation*}
    \widehat{f} (s) = \int_{0}^{\infty} f(t) \mathrm{e} ^{-st} \,\mathrm{d}t.
\end{equation*}

Performing the Laplace transform on the matrix equation, we have
\begin{equation*}
    \widehat{^{C}_{0}{D}_{t}^{\alpha} U} (s) + K \widehat{U}(s) = \widehat{F(U)}(s).
\end{equation*}
The Laplace transform of the Caputo fractional derivative can be written as \( \widehat{^{C}_{0}{D}_{t}^{\alpha} U } (s) = s^{\alpha} \widehat{U}(s) - s^{\alpha - 1} \widehat{U}(0) \) \cite{podlubny1998fractional}. Thus, we have
\begin{equation*}
    \widehat{U} (s) = (s^{\alpha} I + K )^{-1} \bigl( \widehat{F(U)}(s) + s^{\alpha - 1} \widehat{U} (0)  \bigr).
\end{equation*}
Utilizing the inverse Laplace transform, we obtain 
\begin{equation}\label{exponential_integrator_solution}
    \begin{aligned}
        U(t) &= e _{\alpha,1} (t,K) U(0) + \int_{0}^{t} e _{\alpha,\alpha} (t-r,K) F(U(r)) \,\mathrm{d}r\\
        &= e_{\alpha,1} (t,K) U (0) + \sum\limits_{j=0}^{n-1} \int_{t_{j}}^{t_{j+1}} e_{\alpha,\alpha} (t-r,K) F(U(r)) \,\mathrm{d}r ,
    \end{aligned}
\end{equation}
where the function \(e_{\alpha, \beta} (t,\lambda)\) denotes the generalization of the inverse of the Laplace transform \((s^{\alpha} + \lambda)^{-1} s^{\alpha - \beta}\) to the matrix argument \(K\). Note that \(e_{\alpha, \beta}(t,\lambda)\) can be computed by \(e_{\alpha,\beta} (t,\lambda) = t^{\beta -1} E_{\alpha,\beta} (- t^{\alpha} \lambda)\), where \(E_{\alpha,\beta}(z)\) is the Mittag-Leffler function \cite{podlubny1998fractional}
\begin{equation*}
    E_{\alpha, \beta} (z) = \sum\limits_{k=0}^{\infty} \frac{z^{k}}{\Gamma( \alpha k + \beta)}.
\end{equation*}

The following lemma illustrates the property of the function \(e_{\alpha,\beta}(t,\lambda)\) \cite{garrappa2013exponential}
\begin{lemma}
    Suppose that \( a < t \) and \(\alpha,\beta > 0\) and let \(r \in \mathbb{R}\) such that \(r>-1\), then
    \begin{equation*}
        \int_{a}^{t} e_{\alpha,\beta}(t-s,\lambda) (s-a)^{r}\, \mathrm{d}s = \Gamma(r+1)e_{\alpha,\beta+r+1}(t-a,\lambda).
    \end{equation*}
\end{lemma}
To be more specific, we directly compute some special integral with respect to the integral of \(e_{\alpha,\beta}\) function
\begin{equation*}
    \begin{aligned}
        \int_{a}^{b} e_{\alpha,\beta} (t-s,\lambda) \,\mathrm{d}s = &e_{\alpha,\beta +1} (t-a,\lambda) - e_{\alpha,\beta+1}(t-b,\lambda)\\
        \int_{a}^{b} e_{\alpha,\beta} (t-s,\lambda) (s-a) \,\mathrm{d}s = &e_{\alpha,\beta +2} (t-a,\lambda) + e_{\alpha,\beta+2}(t-b,\lambda) \\
        &-(b-a) e_{\alpha,\beta+1}(t-b,\lambda)
    \end{aligned}
\end{equation*}

We approximate \(F(U(r))\) by the constant \(F(U_{j})\), we get the EI method
\begin{equation}\label{ei_weights_scheme}
    U^{n} = e_{\alpha,1} (n \tau , K) U^{0} + \sum\limits_{j=0}^{n-1} W_{n-j}^{(1)} F^{j},
\end{equation}
where \(W_{j} = e_{\alpha, \alpha + 1} (j \tau ,K) - e_{\alpha,\alpha + 1} ( (j-1) \tau , K)\) represent the \textbf{one step} scheme weights and \(F^{j} = F(U^{j}) \) represent the approximation of the semilinear term \( F(U(t_{j})) \).

The one step EI numerical scheme can be enhanced by employing polynomials of higher degree \cite{garrappa2011accurate}. We could consider the piecewise first-order interpolating polynomials
\begin{equation*}
    p_{j}(r) = F^{j} + \frac{r-t_{j}}{\tau} ( F^{j+1} - F^{j}),
\end{equation*}
we replace \(F(U(r))\) by \(p_{j}(r)\) when \(r \in [t_{j},t_{j+1}] , j=0,1,\cdots,n-2\) and by \(p_{n-2}(r)\) when \(r\) lies in the last interval \([t_{n-1},t_{n}]\). Thus, we obtain the two step EI scheme by
\begin{equation}\label{ei_twostep_scheme}
    U^{n} = e_{\alpha,1}(n\tau,K)U^{0} + \widetilde{W}_{n}F^{0} + \frac{1}{\tau} \sum\limits_{j=1}^{n-1} W_{n-j}^{(2)} F^{j} + \frac{e_{\alpha,\alpha+2}(\tau,K)}{\tau}(2F^{n-1} - F^{n-2}) \ n \geq 2,
\end{equation}
where the weights of the \textbf{two step} scheme is that \(W_{j}^{(2)} = e_{\alpha,\alpha+2} ((j-1)\tau,K) - 2 e_{\alpha,\alpha + 2} (j\tau,K) + e_{\alpha,\alpha +2}((j+1)\tau,K)\) and \(\widetilde{W}_{n} = \frac{1}{\tau} \bigl( e_{\alpha,\alpha+2}((n-1)\tau , K) - e_{\alpha,\alpha+2}((n\tau,K)) \bigr) + e_{\alpha,\alpha+1}(n\tau,K)\).

It is not feasible to compute the matrix function \(e_{\alpha , \beta} (t,K)\) directly. We note that the coarse scale matrix \(M, A\) is diagonalizable, thus we can take the matrix \(K\) as \(K = Q^{-1} \Lambda Q\) where the columns of \(Q\) are the eigenvectors of \(K\) or the generalized eigenvectors of \(A\) with respect to \(M\) and \(\Lambda\) is a diagonal matrix with the eigenvalues of \(K\). We substitute the matrix function \(e _{\alpha , \beta} (t,K) = Q^{-1} e _{\alpha , \beta} (t,\Lambda) Q\) into the fully discretized scheme with exponential integrator (EI). Even though the upscaled homogenized system has already reduced the dimension significantly, we could still improve the efficiency of the eigen-decomposition. Some techniques such as FFT method, Conjugate Gradient (CG) method are used to accelerate the computations of eigenvalue problems. Finally, the discretized scheme with one step EI \eqref{ei_weights_scheme} can be described as
\begin{equation}\label{exponential_integrator_scheme}
    Q^{-1} {U}^{n} = e_{\alpha,1}(n\tau, \Lambda) Q {U}^{0} + e_{\alpha,\alpha +1} (n\tau, \Lambda) Q{F}^{0}  + \sum\limits_{j=1}^{n-1} e_{\alpha, \alpha +1} \bigl( (n-j)\tau ,\Lambda \bigr)  Q ({F}^{j} - {F}^{j-1}).
\end{equation}
Similar for the two step EI \eqref{ei_twostep_scheme}.

\subsection{Error estimate for the discretized scheme}

In this subsection, we discuss the convergence of the numerical scheme \eqref{exponential_integrator_scheme}. The error between the numerical solution and the exact solution mainly comes from the multicontinuum model and the exponential integrator scheme. We have already obtained the former in Section \ref{Space_discretization} and the latter, \(\ie\) temporal discretization error, results from the numerical integration. We first give the estimate of the weight \(W_{n,j}\) of \eqref{ei_weights_scheme} in the following Lemma by the property of Mittag-Leffler function.
\begin{lemma}\label{weight_estimation}
    There exists a constant \(C\) only depending on \(\alpha, \ K\) such that for any \(n \geq 1\) \cite{garrappa2013exponential}
    \begin{equation*}
        \bigl\lVert W_{n,j} \bigr\rVert \leq C \tau ^{\alpha} (n-j) ^{\alpha -1} \ \ j = 1,2,\cdots,n-1.
    \end{equation*}
\end{lemma}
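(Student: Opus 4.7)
My plan is to rewrite $W_{n,j}$ as a single time integral and then to bound the integrand pointwise using the standard decay estimate for the Mittag--Leffler function restricted to the non-negative spectrum of $K$.

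First, I would observe that, directly from the derivation preceding \eqref{ei_weights_scheme},
\begin{equation*}
W_{n,j} \;=\; e_{\alpha,\alpha+1}(t_n - t_j, K) - e_{\alpha,\alpha+1}(t_n - t_{j+1}, K) \;=\; \int_{t_j}^{t_{j+1}} e_{\alpha,\alpha}(t_n - r, K)\,\mathrm{d}r,
\end{equation*}
so the task reduces to estimating $\bigl\lVert\,e_{\alpha,\alpha}(t_n-r,K)\,\bigr\rVert$ on $[t_j,t_{j+1}]$.

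Second, I would exploit the spectral structure of $K = M^{-1}A$. Since $M$ is a symmetric positive-definite mass matrix and $A$ comes from the symmetric positive-semidefinite bilinear form $\widetilde{a}_{h,H}$, the matrix $M^{-1/2}AM^{-1/2}$ is symmetric positive semidefinite and $K$ is similar to it; hence $K$ has non-negative real eigenvalues. Writing $K = Q^{-1}\Lambda Q$ and using $e_{\alpha,\alpha}(t,\lambda) = t^{\alpha-1}E_{\alpha,\alpha}(-t^\alpha \lambda)$, I obtain
\begin{equation*}
\bigl\lVert\,e_{\alpha,\alpha}(t_n-r,K)\,\bigr\rVert \;\leq\; \bigl\lVert\,Q^{-1}\,\bigr\rVert\bigl\lVert\,Q\,\bigr\rVert\,(t_n-r)^{\alpha-1}\,\max_{\lambda\geq 0}\bigl|E_{\alpha,\alpha}(-(t_n-r)^\alpha \lambda)\bigr|.
\end{equation*}
The classical bound $|E_{\alpha,\alpha}(-x)| \leq C/(1+x)$ for $x\geq 0$ and $\alpha\in(0,1)$ (a standard consequence of the integral representation of $E_{\alpha,\alpha}$) then yields $\bigl\lVert\,e_{\alpha,\alpha}(t_n-r,K)\,\bigr\rVert \leq C(\alpha,K)(t_n-r)^{\alpha-1}$.

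Third, I would integrate the pointwise bound and use elementary calculus:
\begin{equation*}
\bigl\lVert\,W_{n,j}\,\bigr\rVert \;\leq\; C(\alpha,K)\int_{t_j}^{t_{j+1}} (t_n-r)^{\alpha-1}\,\mathrm{d}r \;=\; \tfrac{C(\alpha,K)}{\alpha}\,\tau^\alpha\bigl[(n-j)^\alpha - (n-j-1)^\alpha\bigr].
\end{equation*}
Applying the mean value theorem to $x\mapsto x^\alpha$ on $[n-j-1,n-j]$ gives $(n-j)^\alpha-(n-j-1)^\alpha = \alpha\,\xi^{\alpha-1}$ for some $\xi\in[n-j-1,n-j]$; since $\alpha-1<0$ this is at most $C_\alpha (n-j)^{\alpha-1}$ (trivially when $j=n-1$, and via $\xi\geq (n-j)/2$ when $j\leq n-2$). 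Combining the two inequalities produces $\bigl\lVert\,W_{n,j}\,\bigr\rVert \leq C'\tau^\alpha (n-j)^{\alpha-1}$ with $C'$ depending only on $\alpha$ and $K$.

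The main technical ingredient is the uniform Mittag--Leffler decay bound on the negative real axis; the rest is bookkeeping. The only delicate point is that the constant $C'$ absorbs the condition number of the diagonalizer $Q$, which is permitted by the statement. If one wished to avoid invoking diagonalizability, the same estimate could be obtained by representing $E_{\alpha,\alpha}(-t^\alpha K)$ through a Hankel contour integral enclosing the (non-negative) spectrum of $K$ and bounding the resolvent $(s^\alpha I + K)^{-1}$ along the contour.
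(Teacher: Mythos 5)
The paper gives no proof of this lemma---it is quoted from \cite{garrappa2013exponential}---so there is nothing internal to compare against; your argument is the standard one that the cited reference relies on, and it is correct. The three ingredients all check out: the identity $W_{n,j}=\int_{t_j}^{t_{j+1}}e_{\alpha,\alpha}(t_n-r,K)\,\mathrm{d}r$ follows from $\tfrac{\mathrm{d}}{\mathrm{d}t}\,t^{\beta}E_{\alpha,\beta+1}(-t^{\alpha}\lambda)=t^{\beta-1}E_{\alpha,\beta}(-t^{\alpha}\lambda)$; the non-negativity of the spectrum of $K=M^{-1}A$ (similarity to $M^{-1/2}AM^{-1/2}$) together with the uniform bound $|E_{\alpha,\alpha}(-x)|\leq C_\alpha$ for $x\geq 0$ gives the pointwise kernel estimate (note you only need uniform boundedness here, not the full $C/(1+x)$ decay); and the mean value theorem step, with the $j=n-1$ case treated separately, delivers $(n-j)^{\alpha}-(n-j-1)^{\alpha}\leq C_\alpha(n-j)^{\alpha-1}$. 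The only cosmetic caveat is that the constant necessarily absorbs $\mathrm{cond}(Q)$, which is consistent with both the lemma's dependence on $K$ and the paper's later use of $\mathrm{cond}(Q)$ in bounding $E_n$.
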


Assume that the homogenized solution \( \bigl( U_{0,h,H}(x,t) , U_{1,h,H}(x,t) \bigr) \) is sufficiently smooth at time \(t_{n}\). Assume the solution \(U_{i,h,H}(t),\ i=0,1\) can be expanded in mix powers of integer and fractional order \cite{lubich1982runge} according to \(U_{i,h,H}(t) = C_{0,0} + C_{0,1}t^{\alpha} + C_{0,2}t^{2\alpha} + C_{1,0}t + C_{1,1}t^{1+\alpha} + C_{1,2}t^{1+2\alpha} + \cdots\). We could obtain the convergence of the exponential integrator scheme \eqref{ei_weights_scheme} from the following theorem.
\begin{theorem}
    Let \( ( U_{0}^{n}, U_{1}^{n} )\) be the approximation of homogenized solution \( ( U_{0,h,H}(t_{n}) ,\\ U_{1,h,H}(t_{n}) ) \). We have following error estimates
    \begin{equation}\label{full_discretized_error}
            \max\limits_{1\leq n \leq N} \bigl\lVert \frac{1}{H^{d}} \int_{[0,H]^{d}} P_{h,H} ( U_{0,h,H}(t_{n}) , U_{1,h,H}(t_{n}) ) -  P_{h,H} ( U_{0}^{n} , U_{1}^{n} ) \bigr\rVert \leq C \tau ^{1+\alpha}.
    \end{equation}
\end{theorem}

\begin{proof}
    We denote \( U(t) = \bigl( U_{0,h,H} (t) , U_{1,h,H} (t) \bigr) \) and \( U^{n} = (U_{0}^{n} , U_{1}^{n}) \). For the second term, by the definition of the NLMC downscaling operator \(P_{h,H}\), we derive that
    \begin{equation*}
        \begin{aligned}
            &\bigl\lVert \frac{1}{H^{d}} \int_{[0,H]^{d}} P_{h,H} \bigl( (U_{0,h,H}(t_{n}) , U_{1,h,H}(t_{n})) - ( U_{0}^{n} , U_{1}^{n}) \bigr) \bigr\rVert\\
            \leq &C \bigl\lVert (U_{0,h,H}(t_{n}) , U_{1,h,H}(t_{n})) - ( U_{0}^{n} , U_{1}^{n}) \bigr\rVert = C \bigl\lVert U(t_{n}) - U^{n} \bigr\rVert.
        \end{aligned}
    \end{equation*}
    By \eqref{exponential_integrator_solution}, we have
    \begin{equation*}
        U(t) = e_{\alpha,1} (t,K) U(0) + \sum\limits_{j=0}^{n-1} W_{n,j} F(U(t_{j})) + E_{n},
    \end{equation*}
    where \(E_{n}\) is the error included by numerical integration
    \begin{equation*}
        E_{n} = \sum\limits_{j=0}^{n-1} \int_{t_{j}}^{t_{j+1}} e_{\alpha,\alpha}(t_{n} - a;K) \bigl( F(U(r)) - F(U(t_{j})) \bigr) \mathrm{d}r,
    \end{equation*}
    and
    \begin{equation*}
        \begin{aligned}
            \bigl\lVert E_{n} \bigr\rVert &\leq L \mathrm{cond}(Q) \sum\limits_{j=0}^{n-1} \int_{t_{j}}^{t_{j+1}} \bigl\lVert e_{\alpha,\alpha}(t_{n} - r;\Lambda) \bigr\rVert \, \cdot \,\bigl\lVert U(r) - U(t_{j}) \bigr\rVert\,\mathrm{d}r\\
            &\leq C_{1} \sum\limits_{j=0}^{n-1} \int_{t_{j}}^{t_{j+1}} (t_{n} - r)^{\alpha -1} \bigl\lVert U(r) - U(t_{j}) \bigr\rVert\,\mathrm{d}r,
        \end{aligned}
    \end{equation*}
    where \(L\) represents the Lipschitz constant. By the mixed expansion in mixed powers of \(t\) and \(t^{1+\alpha}\).\cite{lubich1982runge,garrappa2011accurate} \(\bigl\lVert U(t_{j+1}) - U(t_{j}) \bigr\rVert\) can be bounded by the order \( \tau ^{\alpha}\) in \([0,T]\). Then we have
    \begin{equation*}
        \bigl\lVert E_{n} \bigr\rVert \leq C_{1} \tau ^{1 + \alpha} t_{n}^{\alpha - 1} + C_{2} \tau.
    \end{equation*}
    With the help of Lemma \refeq{weight_estimation} and the discrete Gr\"onwall's inequality \cite{dixon1985order}
    \begin{equation*}
        \begin{aligned}
            \bigl\lVert U(t_{n}) - U^{n} \bigr\rVert &\leq \bigl\lVert E_{n} \bigr\rVert + \sum\limits_{j=0}^{n-1} \bigl\lVert W_{n,j} \bigr\rVert  \bigl\lVert F(U(t_{j})) - F(U^{j}) \bigr\rVert\\
            &\leq \bigl\lVert E_{n} \bigr\rVert + L C' \tau ^{\alpha} \sum\limits_{j=0}^{n-1} (n-j)^{\alpha - 1} \bigl\lVert U(t_{j}) - U^{j} \bigr\rVert\\
            &\leq C \bigl\lVert E_{n} \bigr\rVert \leq C \bigl( \tau + t_{n} ^{\alpha -1} \tau ^{1+\alpha} \bigr).
        \end{aligned}
    \end{equation*}
    Combining all the results above, we give the final estimate \eqref{full_discretized_error}.

\end{proof}

\section{Numerical Experiments}\label{Numerical_experiments}

In this section, we will show some numerical tests for the time fractional parabolic equation with the multiscale permeability field \(\kappa(x)\). We take the domain \(\Omega = [0,1] \times [0,1] \). For the reference solution, we utilize the implicit L1 scheme with finer time step where the time interval \([0,T]\) is divided into uniform parts \(\widehat{N} = 5N\) for time discretization, and the finite element method approximated by quadratic elements basis in space with mesh size \(\frac{1}{400}\) for spatial discretization. We derive the reference solution numerical scheme by
\begin{equation}
    ( M_{f} + \alpha _{0} A_{f}) u^{n+1} = \alpha_{0} f(u^{n+1}) +  (1-b_{1})M_{f} u^{n} + M_{f} \sum\limits_{j=1}^{n-1} (b_{j} - b_{j+1}) u^{n-j} + b_{n} M_{f} u^{0},
\end{equation}
where \(M_{f},A_{f}\) represent the mass matrix and the stiffness matrix in the fine scale. We will discuss the linear problem with source term \(f(x)\) and semilinear source term \(f(x;u(x))\) with two different media respectively. 
In all examples, we take
\begin{equation*}
    \kappa  (x) = \frac{\varepsilon}{10^{5}} \quad \mathrm{in} \ \Omega_{0}, \quad \kappa  (x) = \frac{1}{100\varepsilon} \quad \mathrm{in} \ \Omega _{1}.
\end{equation*}

In all examples, we set the coarse mesh grid size as \(H = \frac{1}{20}\), \(T = 0.001 \) and \(N = 100\). The relative errors between the reference solutions and numerical solutions at each time \(t =  t_{n}\) are defined as follows.
\begin{equation}
    e_{i}^n = \frac{\bigl\lVert \Pi _{i,h} u^{n}  - U_{i}^{n} (x) \bigr\rVert }{\bigl\lVert \Pi _{i,h}  u^{n} \bigr\rVert}, \quad i=0,1.
\end{equation}
where \(\Pi _{i,h} u^{n}\) is the average of reference solutions \(u^{n}\) (solved by finite element method in the fine grid) while \(U_{i}^{n}\) denotes the approximated solution from our proposed method at \(t_{n}\) for a specific continua \(i\). A logarithmic scale has been used for error-axis in all relative error figures. To compare, for the numerical solution, we also present the result of explicit L1 scheme that are unstable for some cases in order to highlight the stability of our methods we present. The explicit L1 scheme reads as
\begin{equation}\label{Explicit_L1_fully_discretized}
    M U^{n+1} = \alpha_{0} f(U^{n+1}) + \bigl( \alpha _{0}A + (1-b_{1})M \bigr) U^{n} + M \sum\limits_{j=1}^{n-1} (b_{j} - b_{j+1}) U^{n-j} + b_{n} M U^{0}.
\end{equation}

\subsection{Smooth source term}
We first take a smooth source term \(f( x_{1},x_{2} ) = \exp \bigl( -50 ( (x_{1} - 0.5)^{2} + (x_{2} - 0.5)^{2}  ) \bigr)\) and the initial condition \(u_{0}(x_{1},x_{2}) = 5 \times 10^{-3} \sin ( 2 \pi x_{1} ) \sin (\pi x _{2})\). In this example, we set \(\alpha = 0.9, 0.6, 0.3\) and \(\varepsilon = \frac{1}{10}\). The permeability field \(\kappa \) is depicted in the left of Figure \ref{kappa_f}, which is a periodic field. The blue region represents the low conductivity region \(\Omega _{0}\), while the yellow region represents the high conductivity region \(\Omega _{1}\). We plot the reference solution snapshots at \(t=0, \ t=\frac{T}{2}, \ t=T\) respectively in Figure \ref{solution_snapshot_0.9}. In Figure \ref{U_0.9}, we show the upscaled solutions and the corresponding averaged reference solutions for \(\alpha=0.9\) at the final time. We note that our proposed approach provides an accurate approximation of the averaged reference solution for different \(\alpha\). Figure \ref{error_example1_linear} shows the error history in the temporal direction corresponding to different \(\alpha\). Using same time step size, we compare the approximate solution obtained using explicit L1 and our proposed method. The relative errors at \(t = T\) are shown in Table \ref{table_error}. It can be seen that for both methods, the approximation performs better when \(\alpha\) is closer to \(1\). However, we observe that the explicit L1 scheme does not converge to the reference solutions when \(\alpha\) goes to \(0\), while our method still gives good results, which demonstrates the effectiveness and stability of our proposed method.

\begin{figure}[H]
    \centering
    \subfloat
    {
        \begin{minipage}[t]{4.5cm}
            \centering
            \includegraphics[width=4cm, height = 3.5cm]{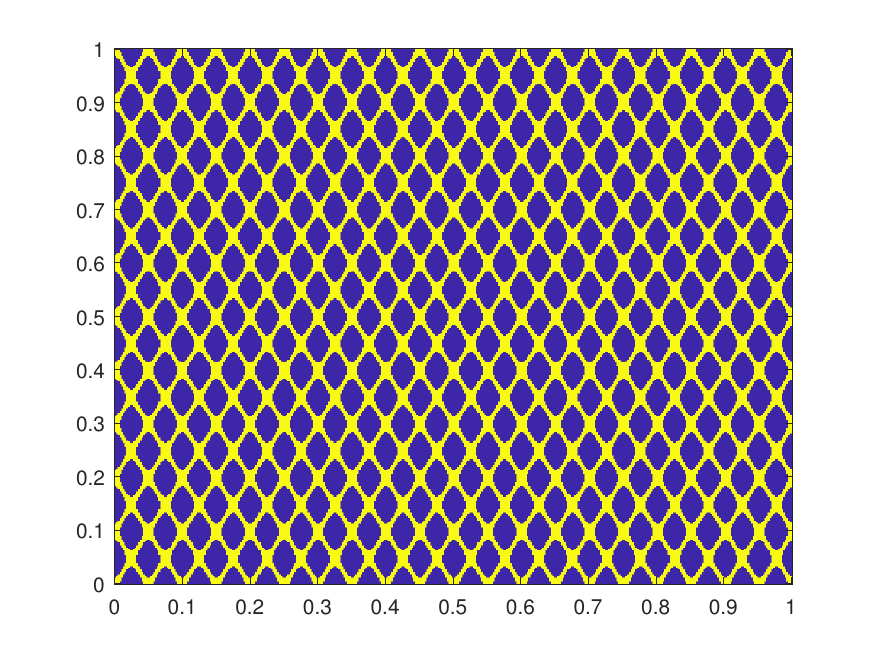}
        \end{minipage}
    }
    \subfloat
    {
        \begin{minipage}[t]{4.5cm}
            \centering
            \includegraphics[width= 4cm, height = 3.5cm]{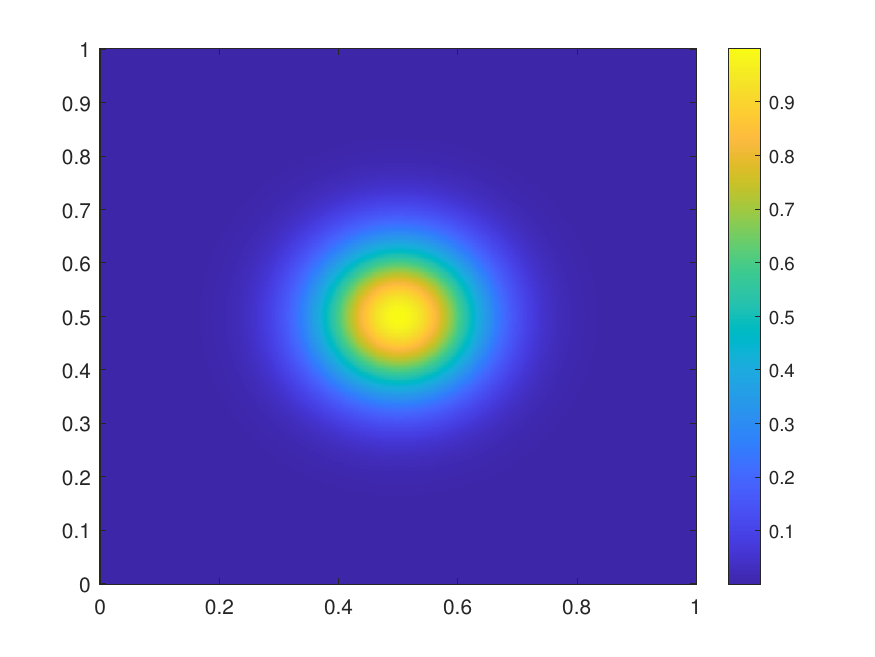}
        \end{minipage}
    }
    \caption{Left: \(\kappa \); Right: \(f\).}
    \label{kappa_f}
\end{figure}

\begin{figure}[H]
    \centering

    \includegraphics[width = 13.5cm,height = 3.5cm]{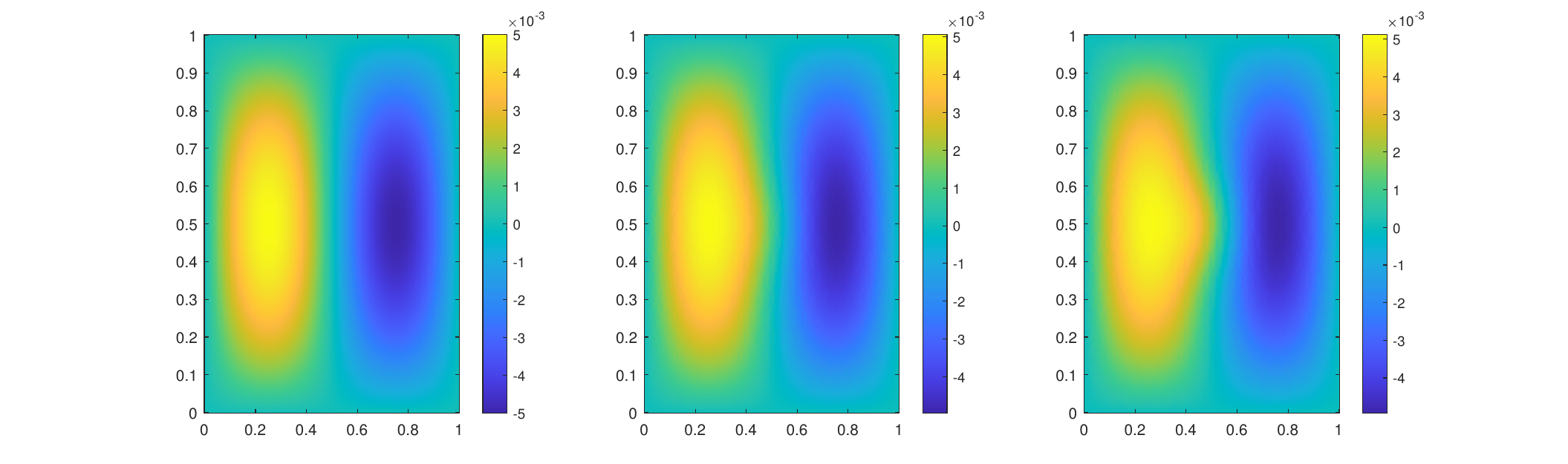}
    \caption{The solution snapshots to linear problems with \(\alpha = 0.9\), Left: \(t = 0\); Middle: \(t = \frac{T}{2}\); Right: \(t = T\).}
    \label{solution_snapshot_0.9}
    
\end{figure}

\begin{figure}[H]
    \centering

    \subfloat
    {
        \begin{minipage}[t]{4cm}
            \centering
            \includegraphics[width=4cm,height=3.5cm]{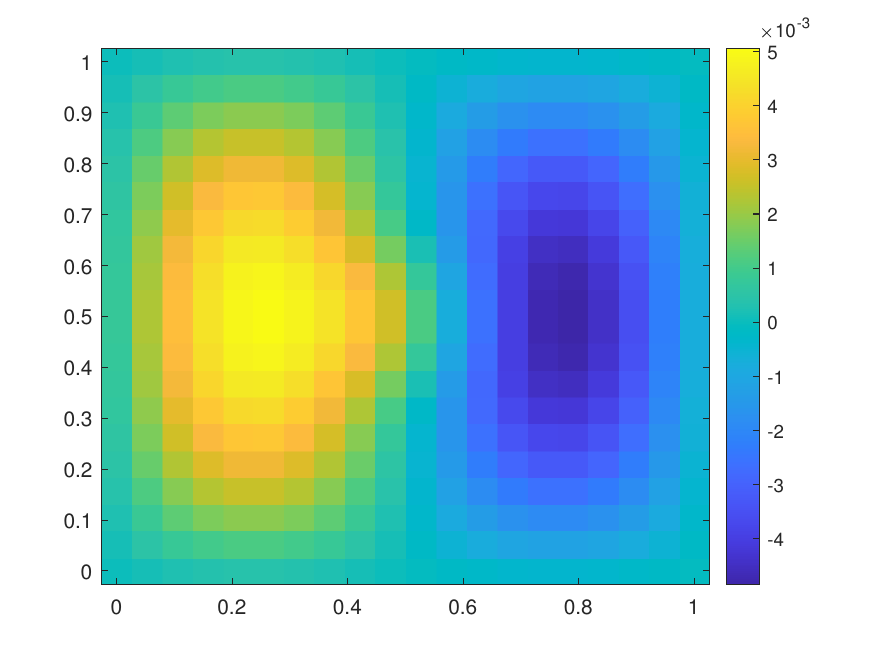}
        \end{minipage}
    }
    \subfloat
    {
        \begin{minipage}[t]{4cm}
            \centering
            \includegraphics[width=4cm,height=3.5cm]{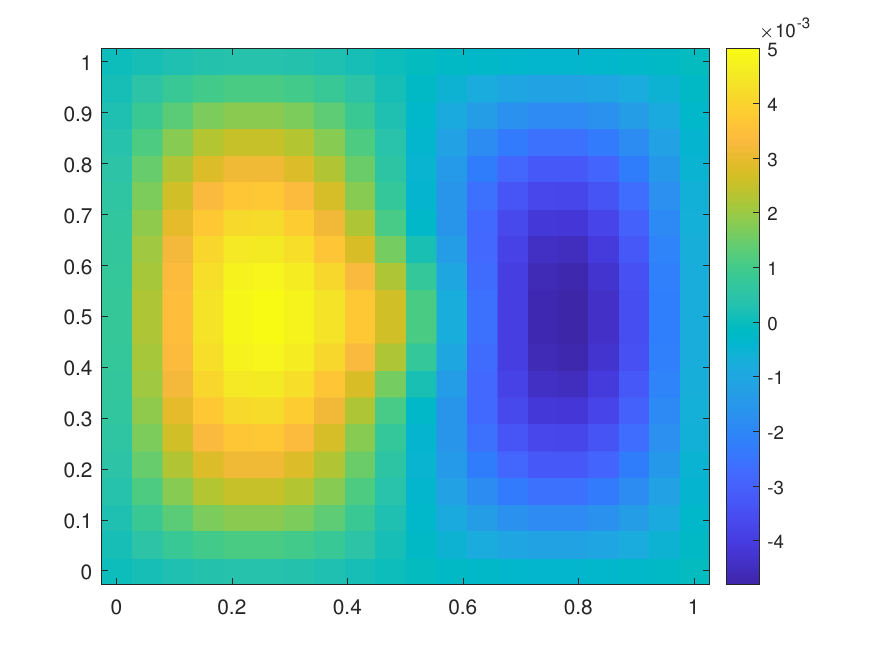}
        \end{minipage}
    }
    \subfloat
    {
        \begin{minipage}[t]{4cm}
            \centering
            \includegraphics[width=4cm,height=3.5cm]{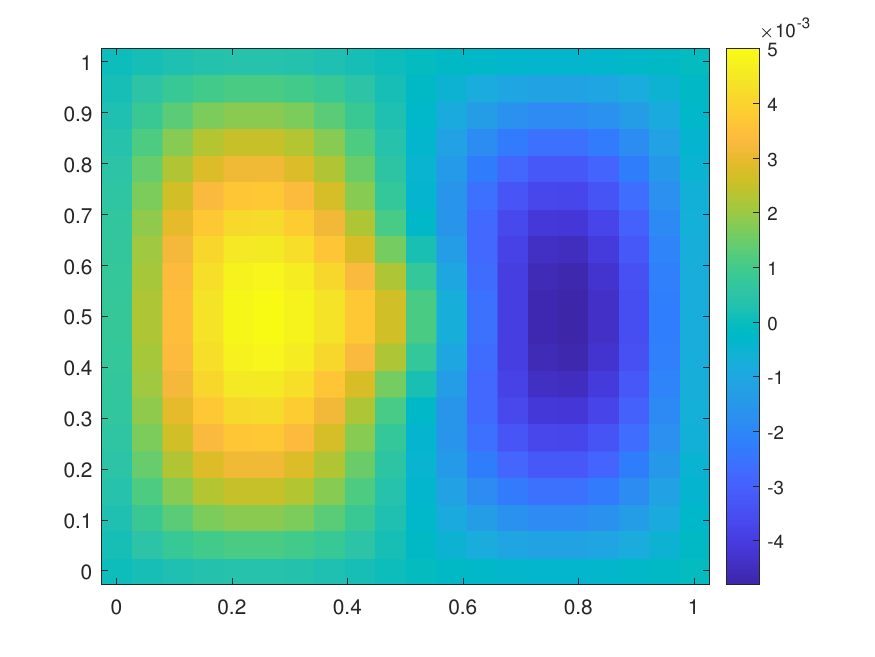}
        \end{minipage}
    }
    \\
    \subfloat
    {
        \begin{minipage}[t]{4cm}
            \centering
            \includegraphics[width=4cm,height=3.5cm]{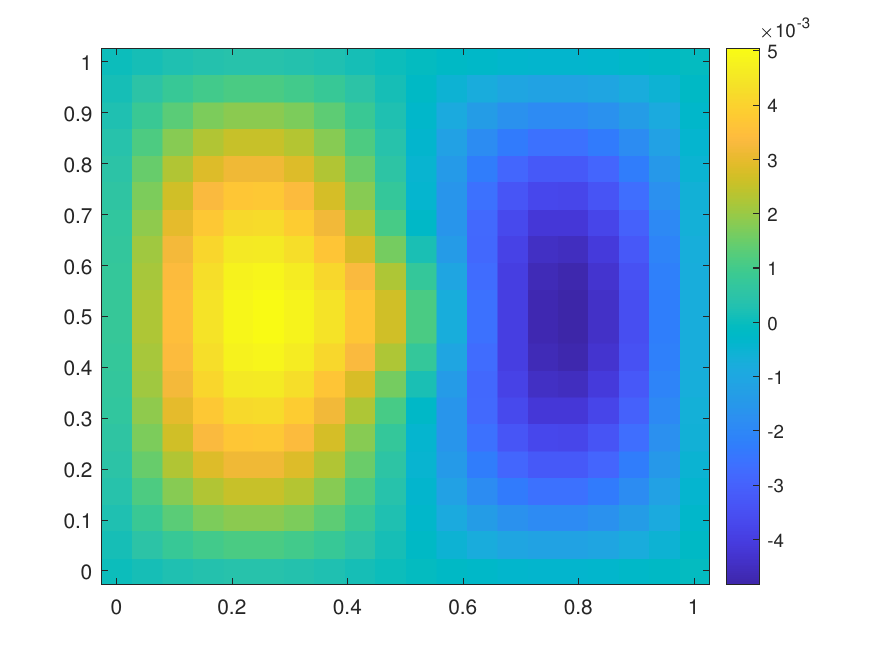}
        \end{minipage}
    }
    \subfloat
    {
        \begin{minipage}[t]{4cm}
            \centering
            \includegraphics[width=4cm,height=3.5cm]{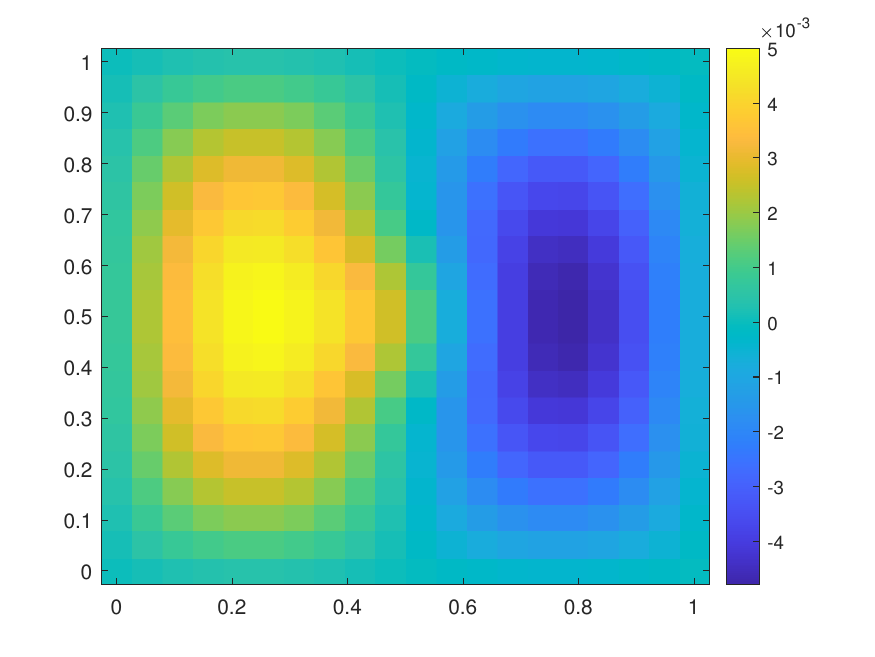}
        \end{minipage}
    }
    \subfloat
    {
        \begin{minipage}[t]{4cm}
            \centering
            \includegraphics[width=4cm,height=3.5cm]{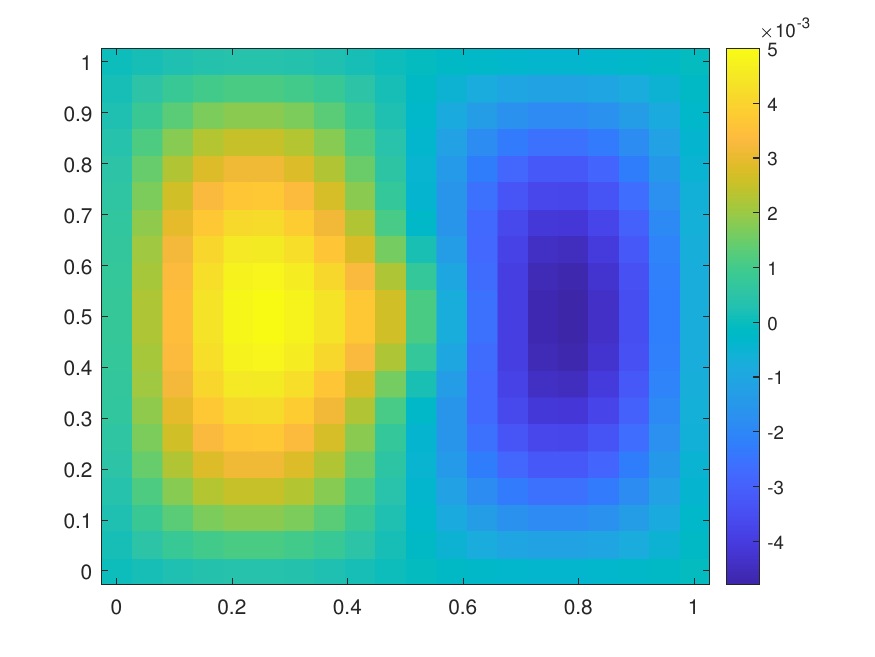}
        \end{minipage}
    }

    \caption{\(\alpha = 0.9 \) and \( t=T \). Left: average of reference solutions; Middle: numerical solutions by L1 implicit scheme; Right: numerical solutions by EI method. The first row: \(U_{0}\), the second row: \(U_{1}\).}
    \label{U_0.9}
\end{figure}

\begin{table}[H]
    \belowrulesep=0pt
    \aboverulesep=0pt
    \centering
    \begin{tabular}{c|cc|cc}
        \toprule
        \multirow{2}*{\(\alpha\)} & \multicolumn{2}{c|}{\(U_{0}\)} & \multicolumn{2}{c}{\(U_{1}\)}\\
         & Explicit L1 & EI method & Explicit L1 & EI method\\
        \midrule
        0.9 & 0.0105 & 0.0105 & 0.0083 & 0.0083\\
        0.6 & 0.0133 & 0.0122 & 0.0115 & 0.0130\\
        0.3 & NaN & 0.0462 & NaN & 0.0460\\
        \bottomrule
    \end{tabular}
    \caption{Relative errors at \(t=T\).}
    \label{table_error}
\end{table}

\begin{figure}[H]
    \centering
    
    \subfloat
    {
        \begin{minipage}[t]{5cm}
            \centering
            \includegraphics[width=4.5cm,height=4cm]{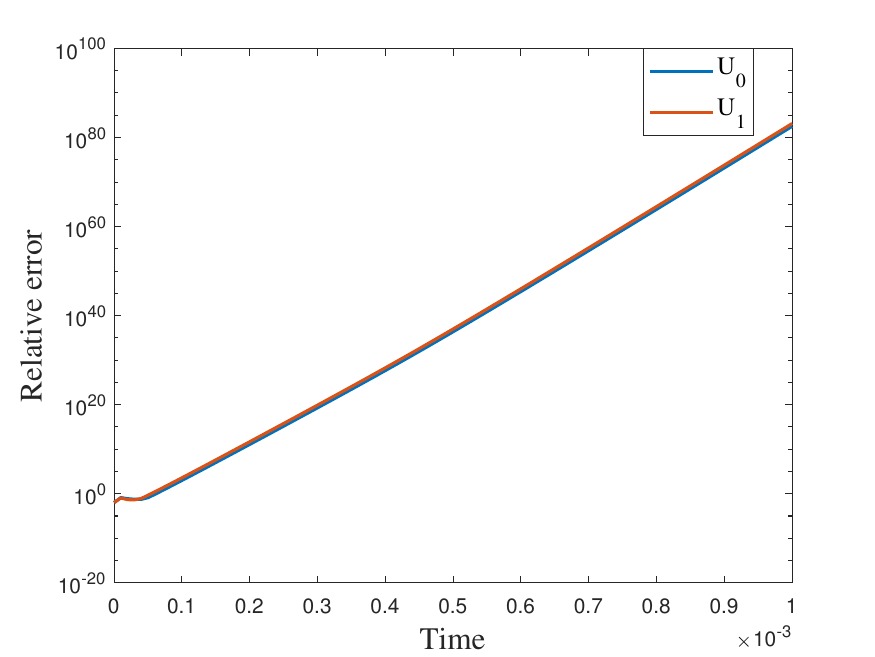}
        \end{minipage}
    }
    \subfloat
    {
        \begin{minipage}[t]{5cm}
            \centering
            \includegraphics[width=4.5cm,height=4cm]{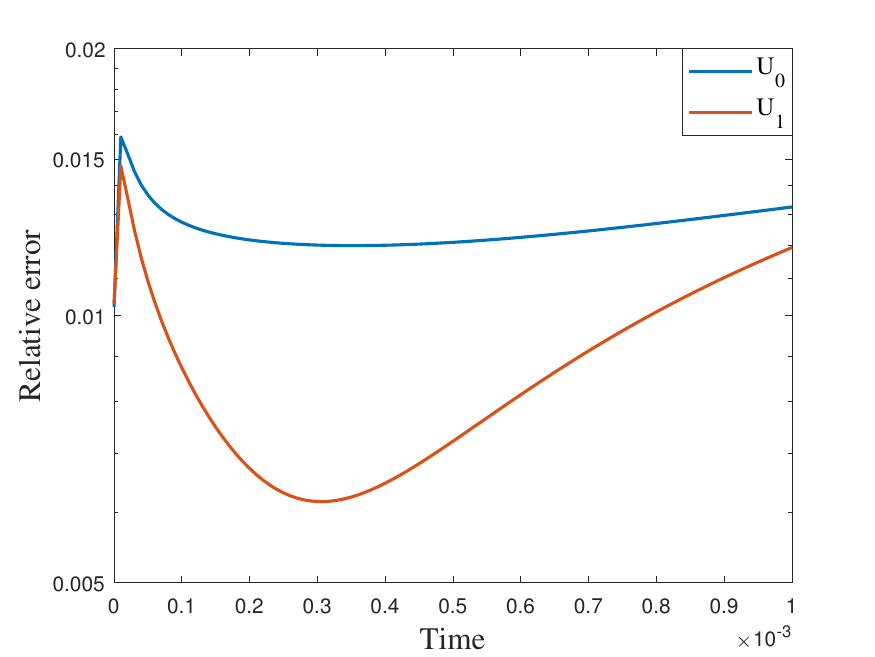}
        \end{minipage}
    }
    \subfloat
    {
        \begin{minipage}[t]{5cm}
            \centering
            \includegraphics[width=4.5cm,height=4cm]{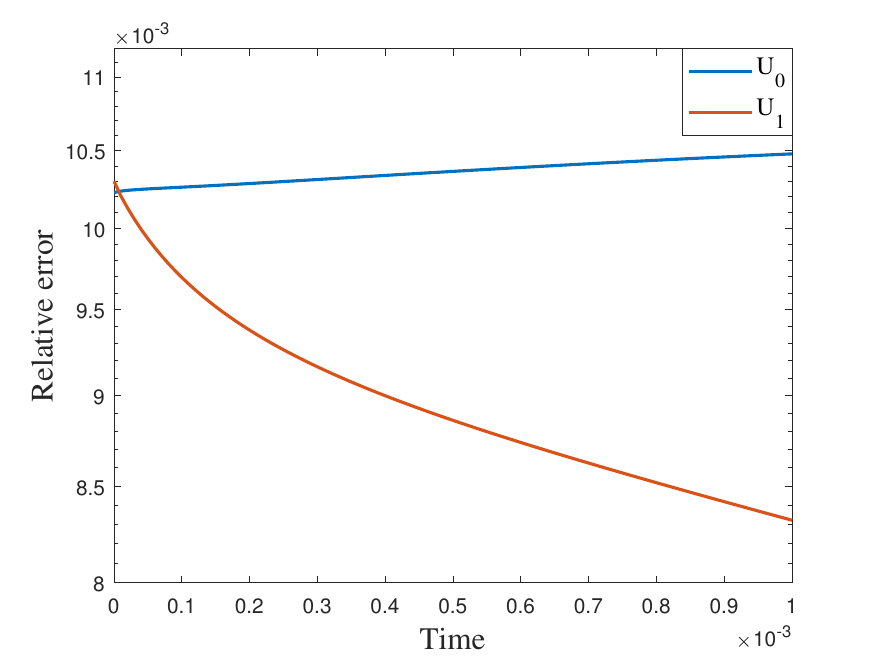}
        \end{minipage}
    }
    \\
    \subfloat
    {
        \begin{minipage}[t]{5cm}
            \centering
            \includegraphics[width=4.5cm,height=4cm]{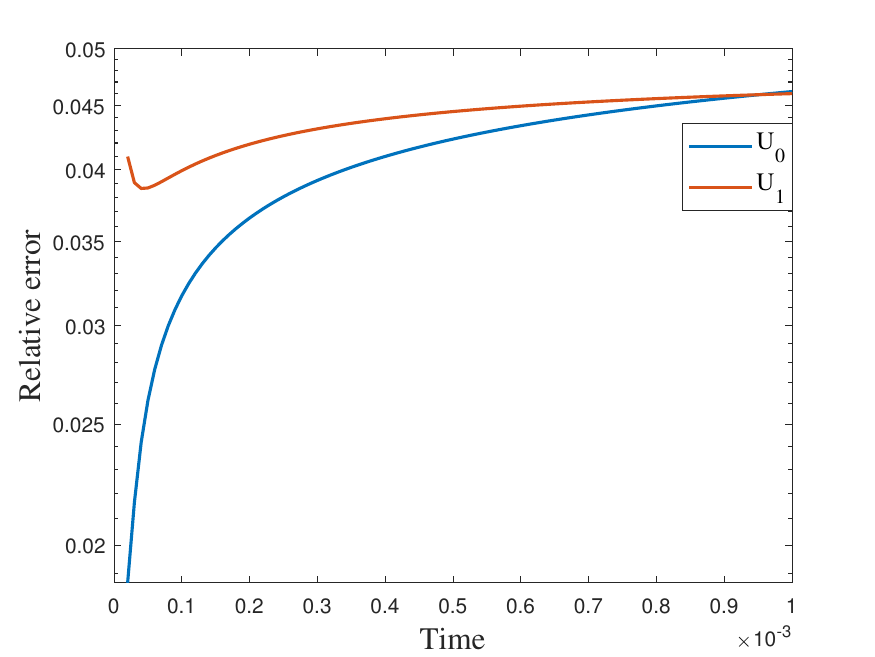}
        \end{minipage}
    }
    \subfloat
    {
        \begin{minipage}[t]{5cm}
            \centering
            \includegraphics[width=4.5cm,height=4cm]{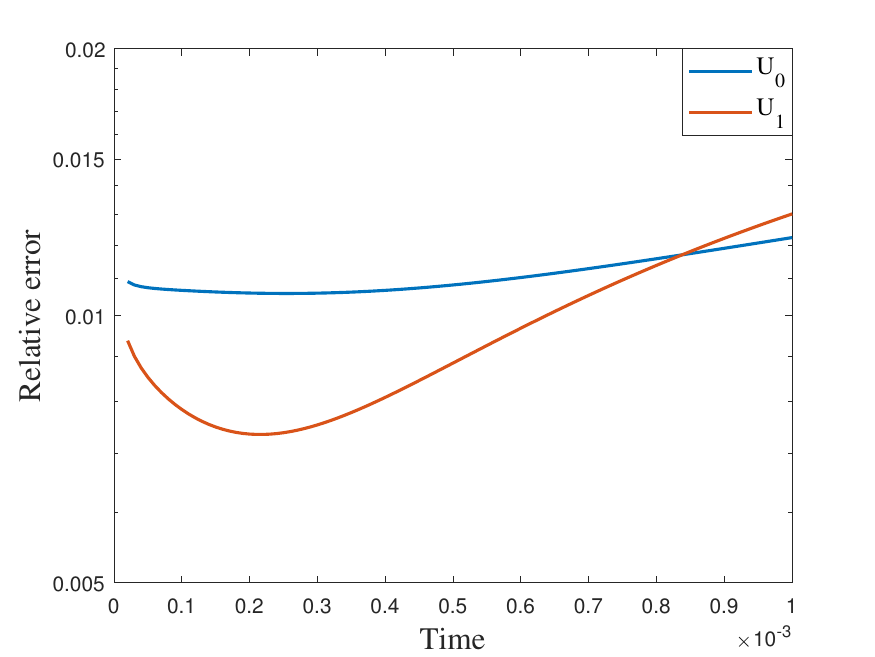}
        \end{minipage}
    }
    \subfloat
    {
        \begin{minipage}[t]{5cm}
            \centering
            \includegraphics[width=4.5cm,height=4cm]{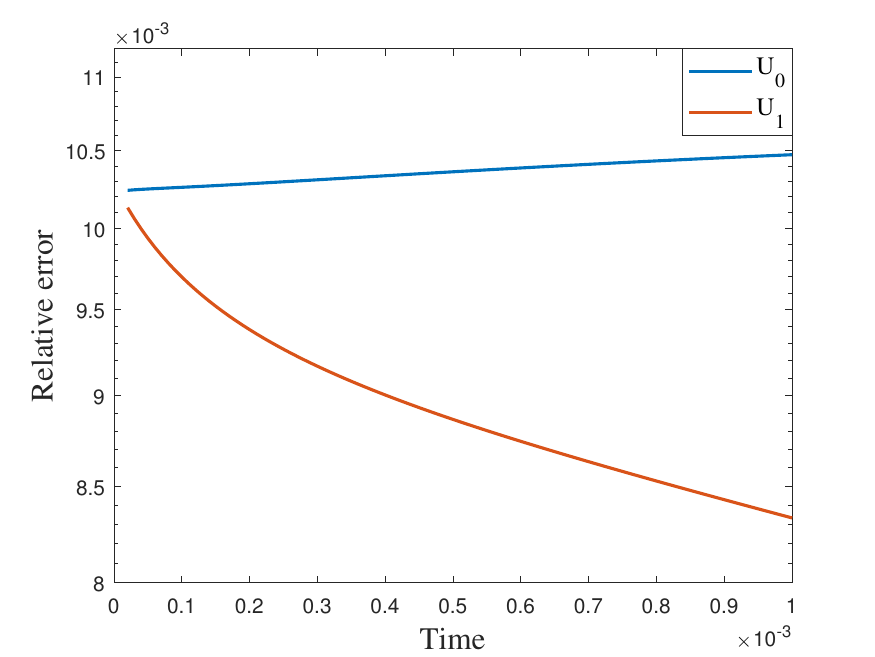}
        \end{minipage}
    }
    \caption{The first row: relative errors of explicit L1 scheme; The second row: relative errors of EI method. For each column, we take \(\alpha = 0.3, 0.6, 0.9\) form left to right.}
    \label{error_example1_linear}
\end{figure}

\subsection{Semilinear problems}

In this numerical test, we consider the semilinear problems and use the same media as in the first experiment. The results of the semilinear problems are also computed with different \(\alpha\) by our proposed method. We take \(f = u - u^{3} + f_{0}(x_{1},x_{2})\) with the initial condition \(u_{0}(x_{1},x_{2}) = -x_{1} (1-x_{1}) x_{2} (1-x_{2})\), where \(f_{0}(x_{1},x_{2}) = \exp \bigl(\,-50 \bigl(\, (x_{1} - 0.4)^{2} + (x_{2} - 0.6)^{2} \,\bigr)\,\bigr)\) is the smooth source term shown in the right of Figure \ref{kappa2_f2}. We present the relative errors for different \(\alpha = 0.9, 0.6, 0.3\). When \(\alpha\) tends to \(0\), the explicit L1 scheme turns to be unstable while the EI method performs well. Moreover, for the semilinear problems, the L1 scheme needs iterative methods at each time step while the exponential integrator method does not. The reference solution snapshots are depicted in Figure \ref{solution_snapshot_0.6_semi_k1}. The Figure \ref{U_0.6_semi_k1} illustrates the numerical solutions by L1 scheme and exponential integrator method compared to the average of the reference solutions. We figure out that our proposed method also works well for semilinear diffusion problems. We represent the relative errors in Figure \ref{error_example1_k1_semi} and Table \ref{table_error_semi} with different \( \alpha \).

\begin{figure}[H]
    \centering

    \includegraphics[width = 13.5cm,height = 3.5cm]{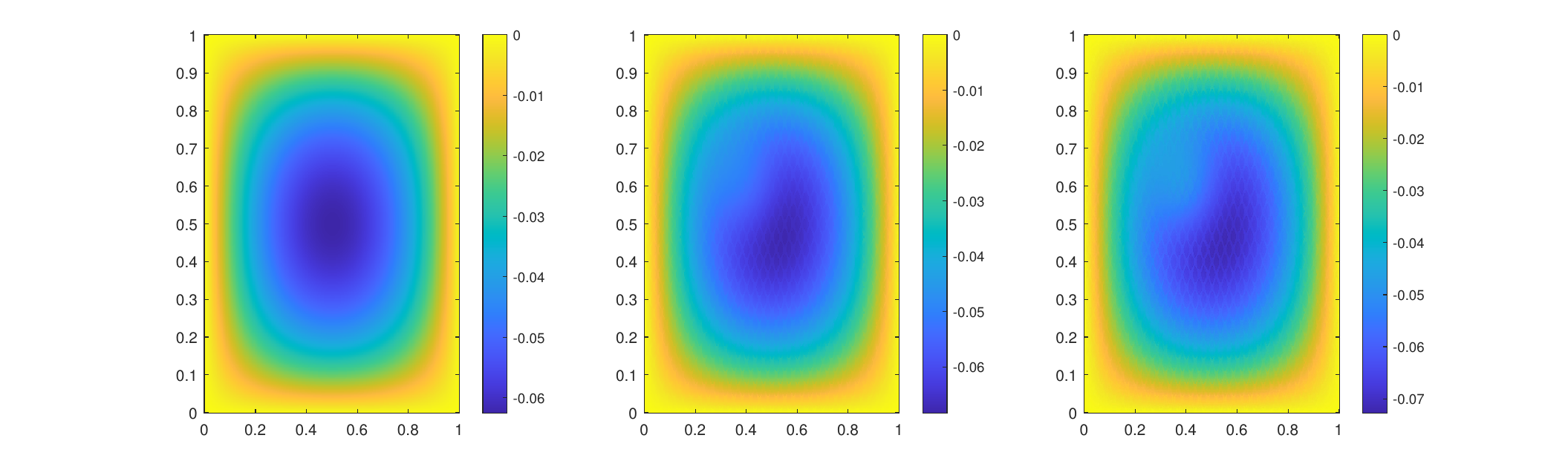}
    \caption{The solution snapshots to semilinear problem with \(\alpha = 0.6\), Left: \(t = 0\); Middle: \(t = \frac{T}{2}\); Right: \(t = T\).}
    \label{solution_snapshot_0.6_semi_k1}
    
\end{figure}

\begin{figure}[H]
    \centering

    \subfloat
    {
        \begin{minipage}[t]{4cm}
            \centering
            \includegraphics[width=4cm,height=3.5cm]{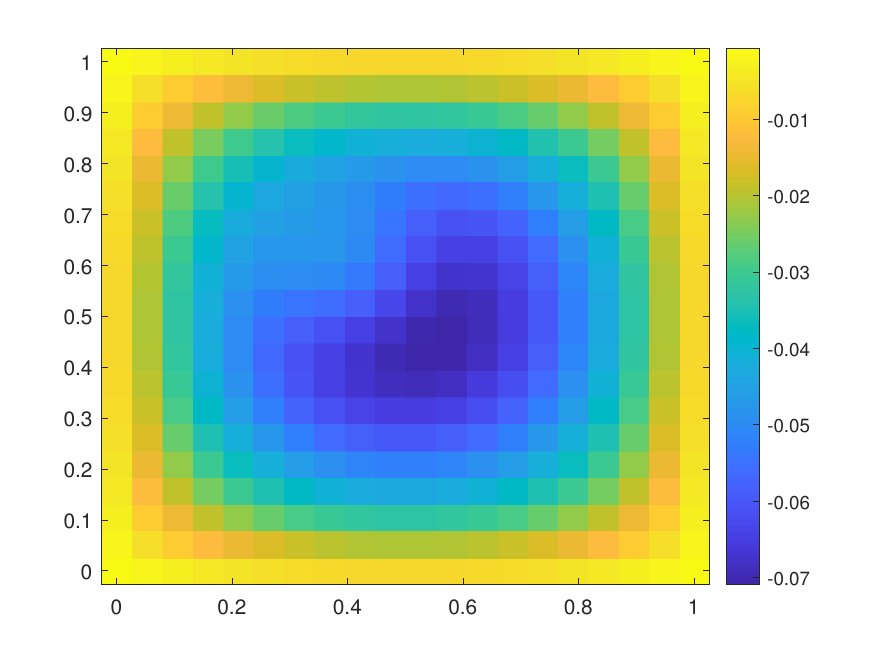}
        \end{minipage}
    }
    \subfloat
    {
        \begin{minipage}[t]{4cm}
            \centering
            \includegraphics[width=4cm,height=3.5cm]{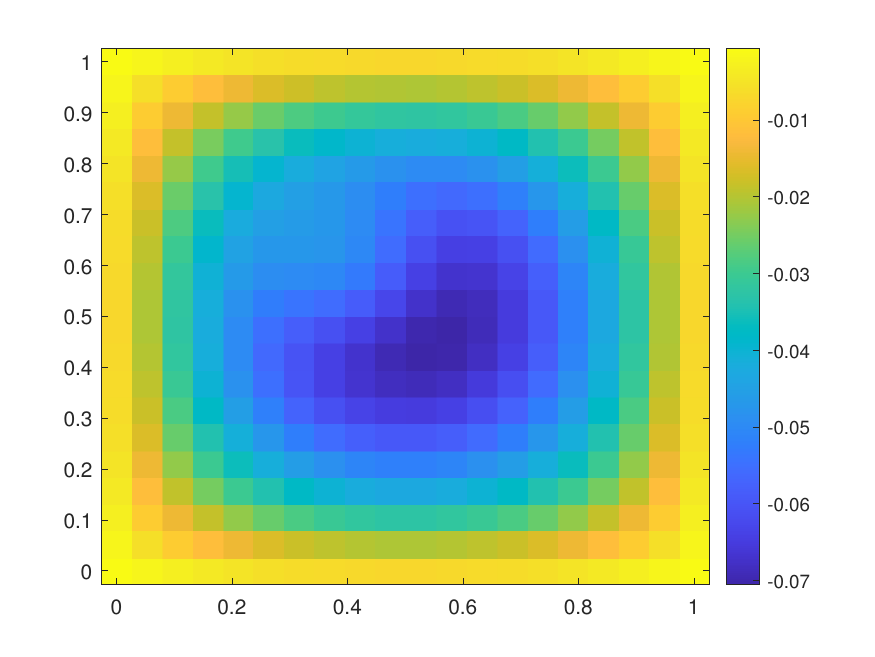}
        \end{minipage}
    }
    \subfloat
    {
        \begin{minipage}[t]{4cm}
            \centering
            \includegraphics[width=4cm,height=3.5cm]{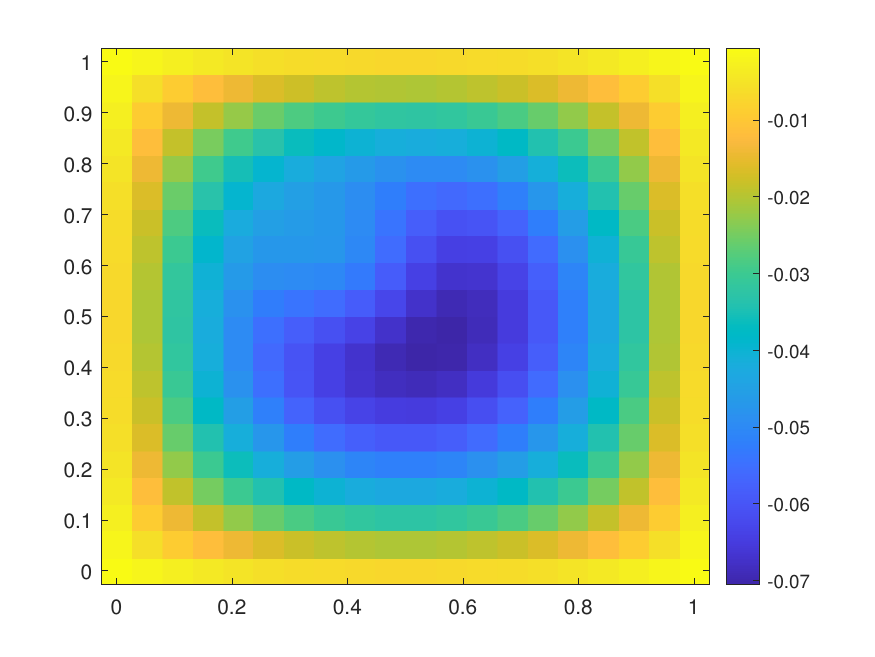}
        \end{minipage}
    }
    \\
    \subfloat
    {
        \begin{minipage}[t]{4cm}
            \centering
            \includegraphics[width=4cm,height=3.5cm]{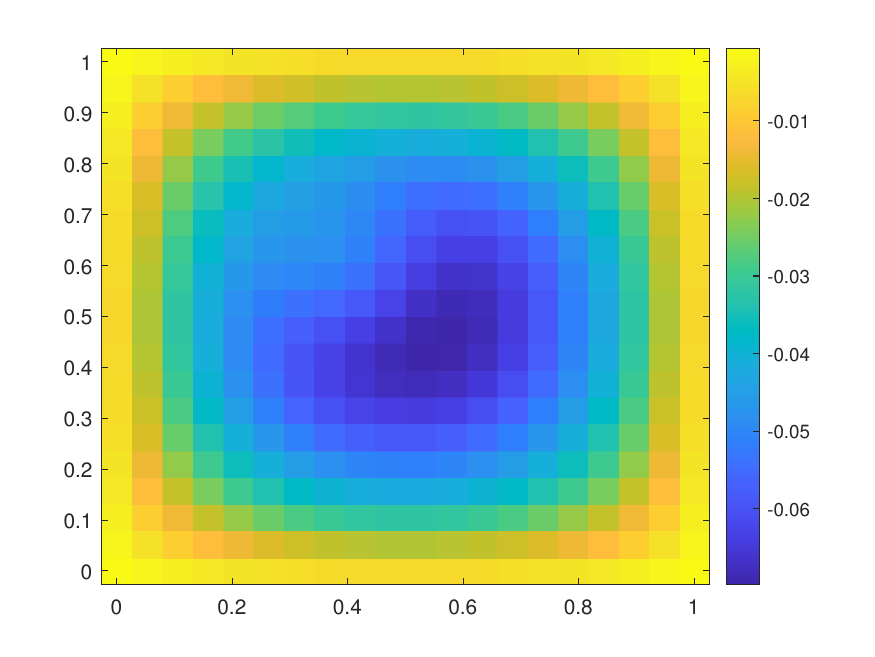}
        \end{minipage}
    }
    \subfloat
    {
        \begin{minipage}[t]{4cm}
            \centering
            \includegraphics[width=4cm,height=3.5cm]{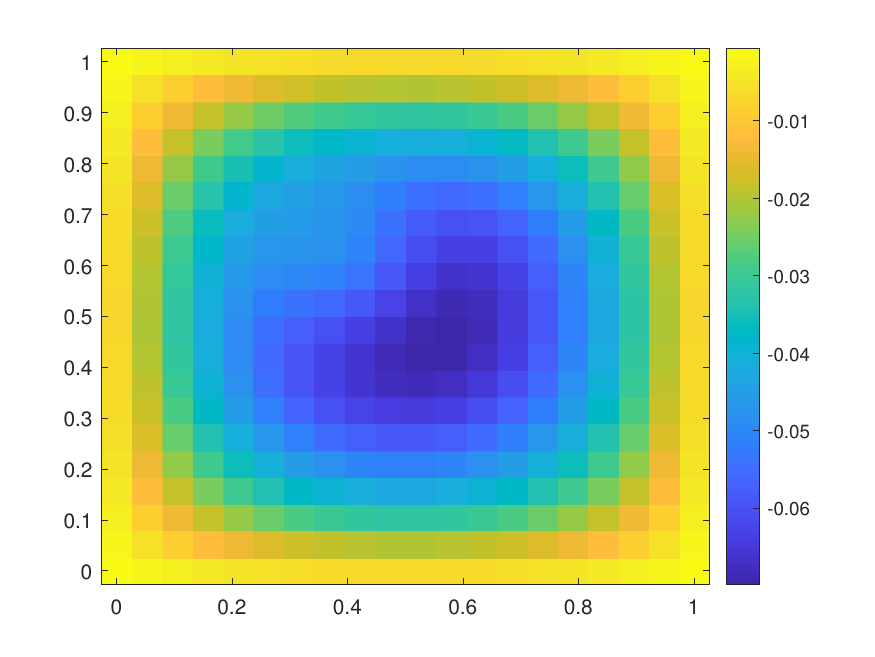}
        \end{minipage}
    }
    \subfloat
    {
        \begin{minipage}[t]{4cm}
            \centering
            \includegraphics[width=4cm,height=3.5cm]{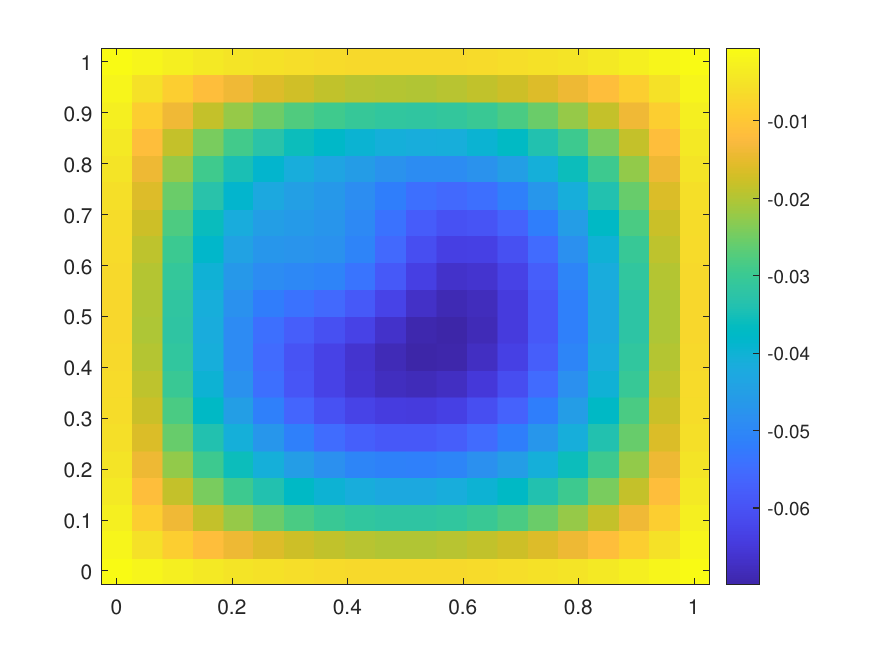}
        \end{minipage}
    }

    \caption{\(\alpha = 0.6 \) and \( t=T \). Left: average of reference solutions; Middle: numerical solutions by L1 implicit scheme; Right: numerical solutions by EI method. The first row: \(U_{0}\), the second row: \(U_{1}\).}
    \label{U_0.6_semi_k1}
\end{figure}

\begin{table}[H]
    \belowrulesep=0pt
    \aboverulesep=0pt
    \centering
    \begin{tabular}{c|cc|cc}
        \toprule
        \multirow{2}*{\(\alpha\)} & \multicolumn{2}{c|}{\(U_{0}\)} & \multicolumn{2}{c}{\(U_{1}\)}\\
         & Explicit L1 & EI method & Explicit L1 & EI method\\
        \midrule
        0.9 & 0.0165 & 0.0046 & 0.0035 & 0.0176\\
        0.6 & 0.2125 & 0.0070 & 0.2203 & 0.0019\\
        0.3 & NaN & 0.0607 & NaN & 0.0106\\
        \bottomrule
    \end{tabular}
    \caption{Relative errors at \(t=T\).}
    \label{table_error_semi}
\end{table}

\begin{figure}[H]
    \centering
    
    \subfloat
    {
        \begin{minipage}[t]{5cm}
            \centering
            \includegraphics[width=4.5cm,height=4cm]{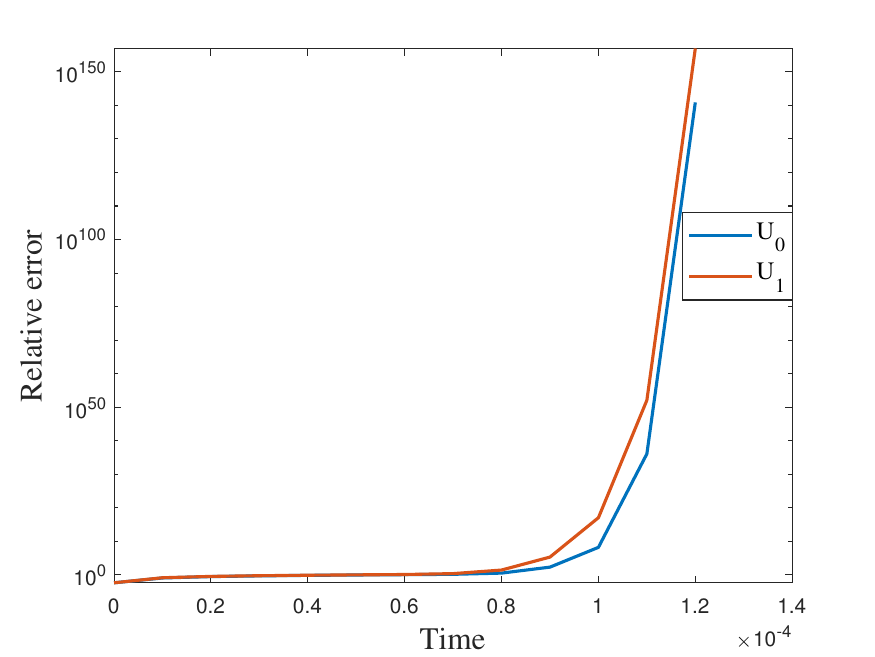}
        \end{minipage}
    }
    \subfloat
    {
        \begin{minipage}[t]{5cm}
            \centering
            \includegraphics[width=4.5cm,height=4cm]{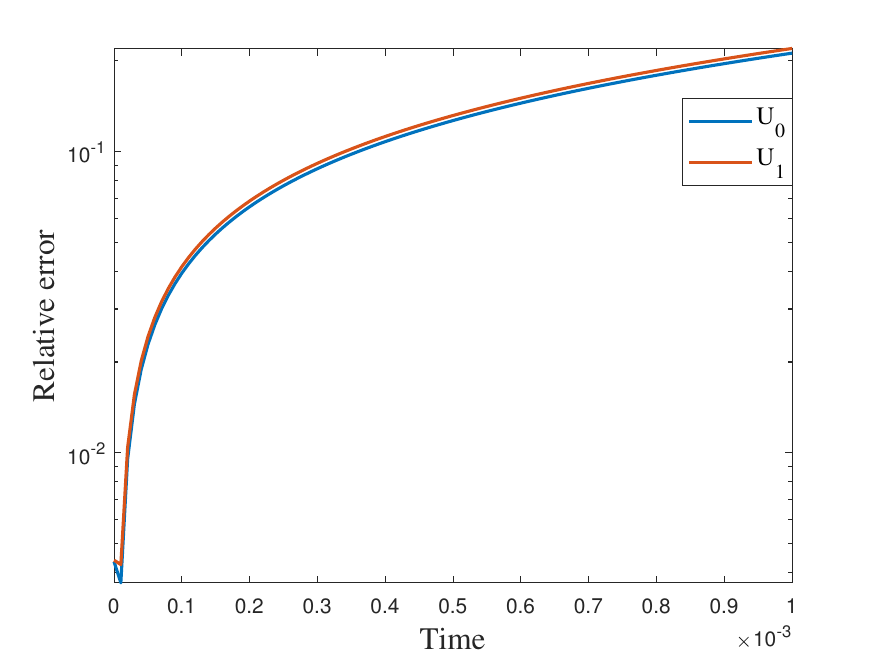}
        \end{minipage}
    }
    \subfloat
    {
        \begin{minipage}[t]{5cm}
            \centering
            \includegraphics[width=4.5cm,height=4cm]{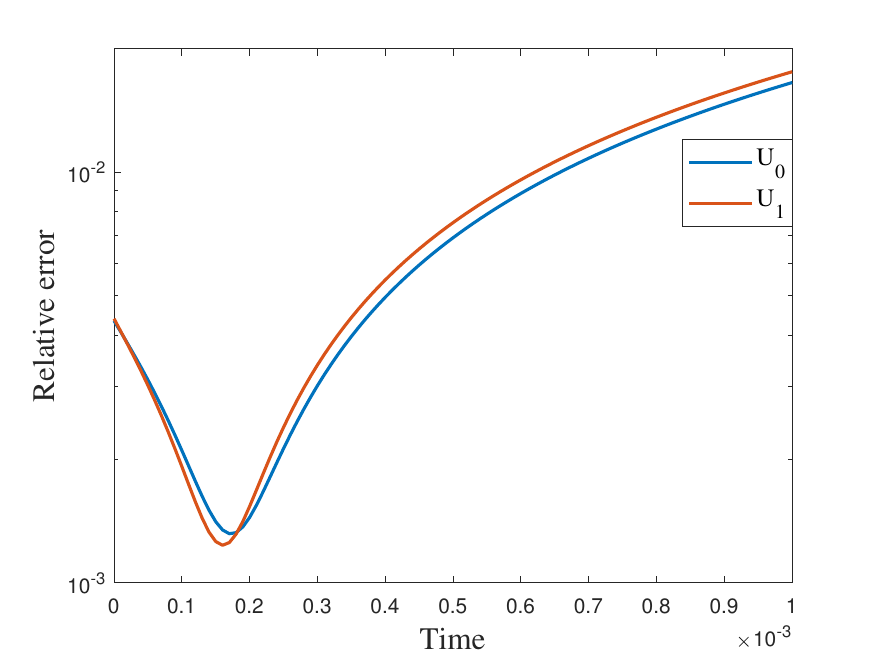}
        \end{minipage}
    }
    \\
    \subfloat
    {
        \begin{minipage}[t]{5cm}
            \centering
            \includegraphics[width=4.5cm,height=4cm]{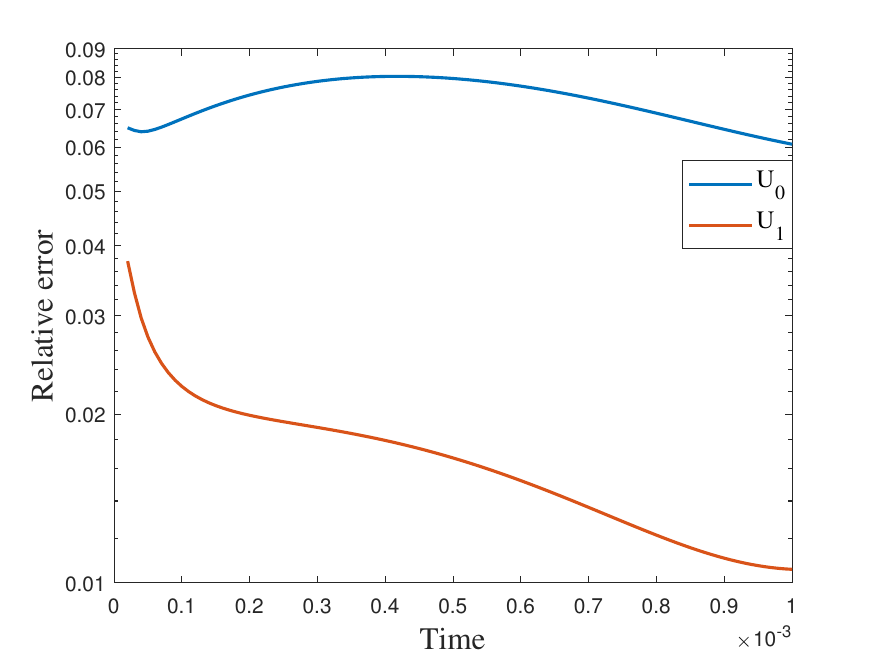}
        \end{minipage}
    }
    \subfloat
    {
        \begin{minipage}[t]{5cm}
            \centering
            \includegraphics[width=4.5cm,height=4cm]{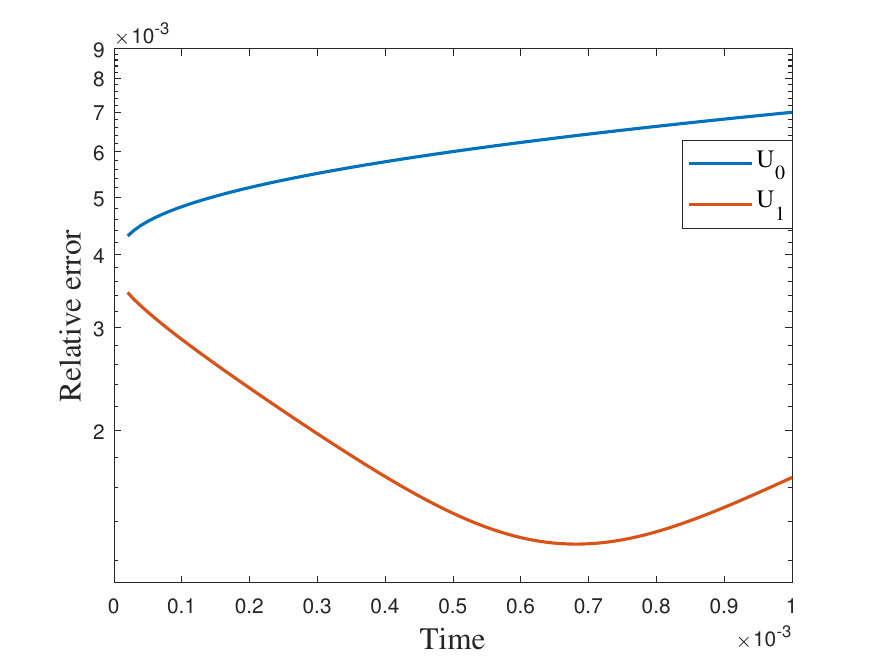}
        \end{minipage}
    }
    \subfloat
    {
        \begin{minipage}[t]{5cm}
            \centering
            \includegraphics[width=4.5cm,height=4cm]{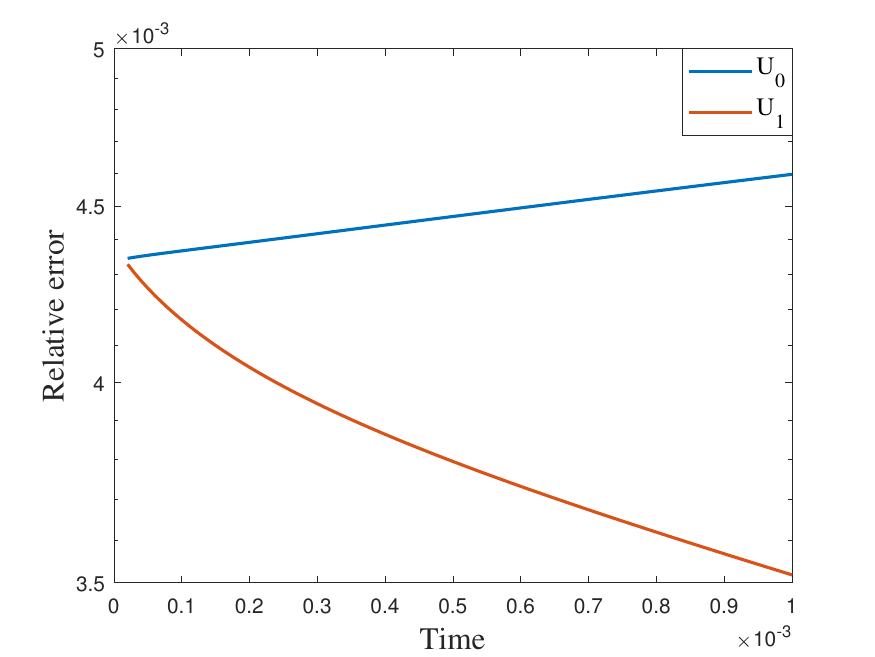}
        \end{minipage}
    }
    \caption{The first row: relative errors by explicit L1 scheme; The second row: relative errors by EI method. For each column, we take \(\alpha = 0.3, 0.6, 0.9\) form left to right.}
    \label{error_example1_k1_semi}
\end{figure}

\subsection{A more complicated media}
In our other example, we take nonperiodic media \(\kappa\) as shown in the left of Figure \ref{kappa2_f2}. The other settings are the same as the previous semilinear case. We also represent the reference solution snapshots in Figure \ref{solution_snapshot_0.6_semi_k2} with \(\alpha = 0.6\). By the relative errors in Figure \ref{error_example3_k2_semi} and Table \ref{table_error_example3}, the similar findings has been observed as in the previous cases. The errors of the numerical solutions is slightly larger compared to the previous examples, but they still converge to the reference solution.

\begin{figure}[H]
    \centering
    \subfloat
    {
        \begin{minipage}[t]{4.5cm}
            \centering
            \includegraphics[width=4.5cm, height = 3.5cm]{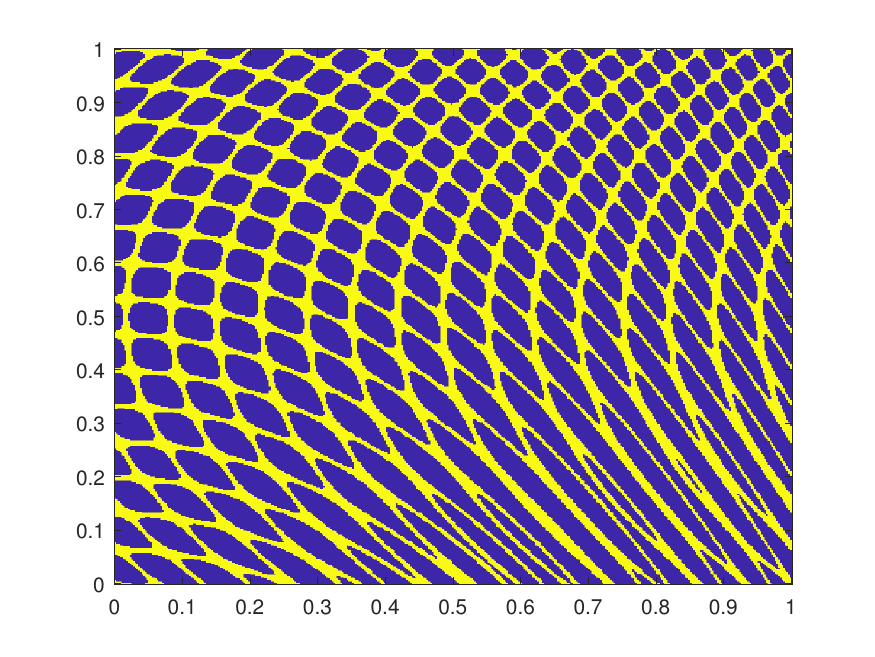}
        \end{minipage}
    }
    \subfloat
    {
        \begin{minipage}[t]{5cm}
            \centering
            \includegraphics[width=4.5cm, height = 3.5cm]{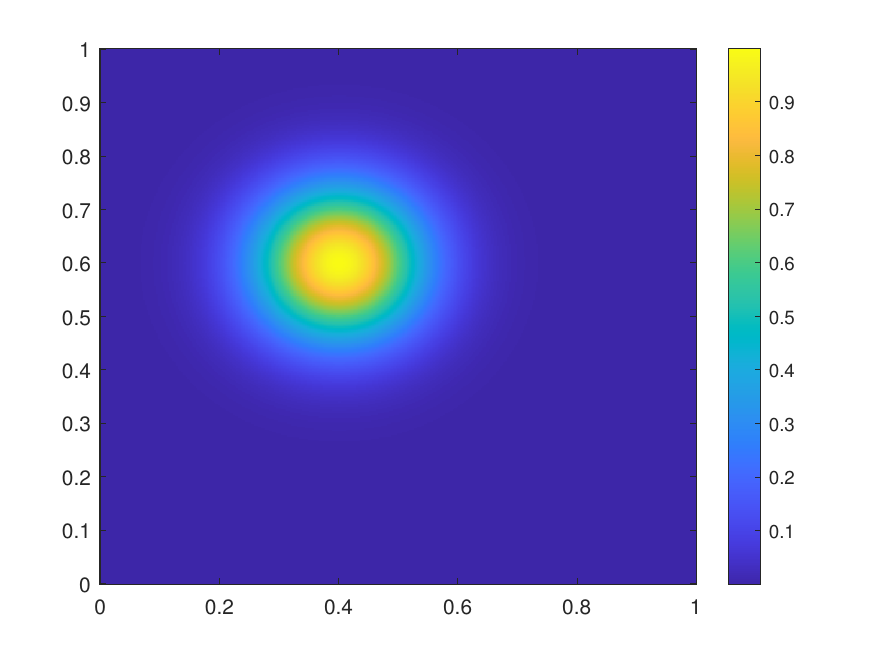}
        \end{minipage}
    }
    \caption{Left: \(\kappa\); Right: \(f_{0}\).}
    \label{kappa2_f2}
\end{figure}

\begin{figure}[H]
    \centering

    \includegraphics[width = 13.5cm]{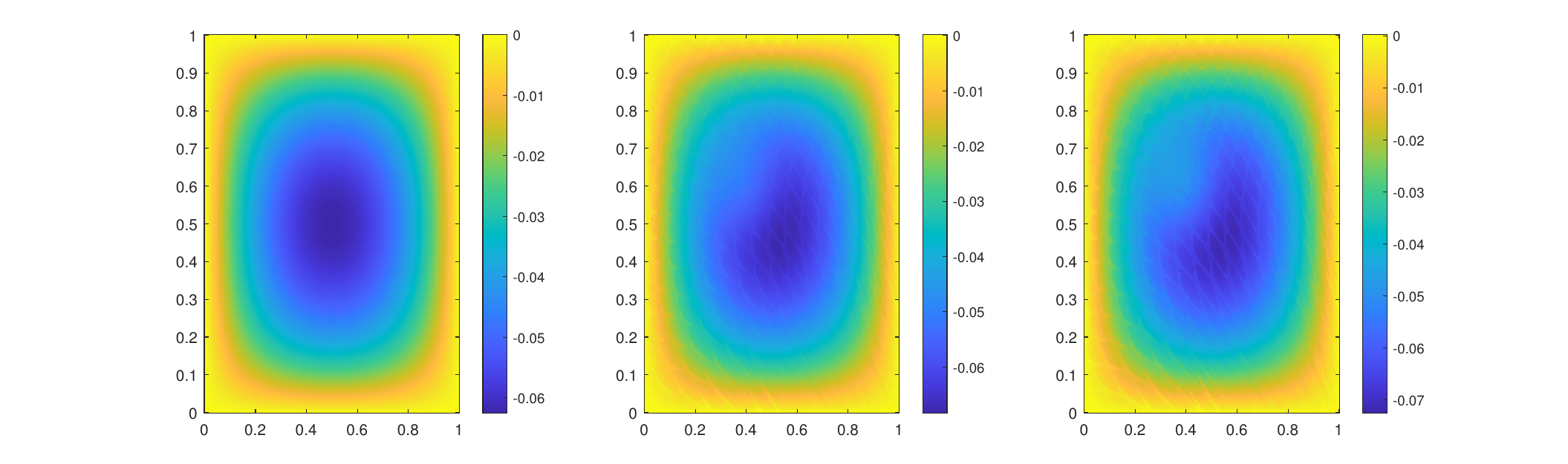}
    \caption{The solution snapshots to semi-linear problem with \(\alpha = 0.6\), Left: \(t = 0\); Middle: \(t = \frac{T}{2}\); Right: \(t = T\).}
    \label{solution_snapshot_0.6_semi_k2}
    
\end{figure}

\begin{table}[H]
    \belowrulesep=0pt
    \aboverulesep=0pt
    \centering
    \begin{tabular}{c|cc|cc}
        \toprule
        \multirow{2}*{\(\alpha\)} & \multicolumn{2}{c|}{\(U_{0}\)} & \multicolumn{2}{c}{\(U_{1}\)}\\
         & Explicit L1 & EI method & Explicit L1 & EI method\\
        \midrule
        0.9 & 0.0201 & 0.0122 & 0.0345 & 0.0308\\
        0.6 & 0.2133 & 0.0145 & 0.2137 & 0.0374\\
        0.3 & NaN & 0.1318 & NaN & 0.1173\\
        \bottomrule
    \end{tabular}
    \caption{Relative errors at \(t=T\).}
    \label{table_error_example3}
\end{table}

\begin{figure}[H]
    \centering
    
    \subfloat
    {
        \begin{minipage}[t]{5cm}
            \centering
            \includegraphics[width=4.5cm,height=4cm]{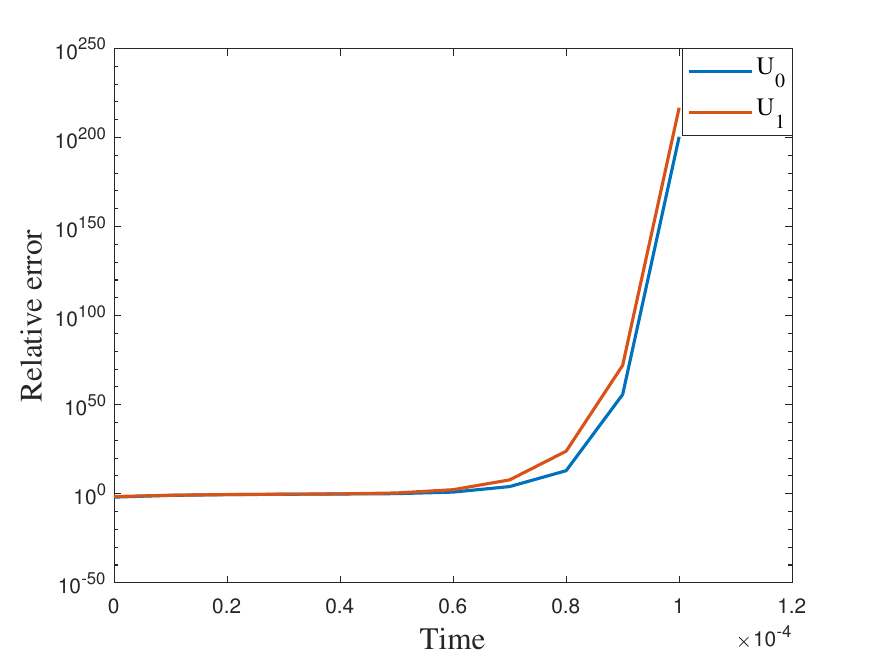}
        \end{minipage}
    }
    \subfloat
    {
        \begin{minipage}[t]{5cm}
            \centering
            \includegraphics[width=4.5cm,height=4cm]{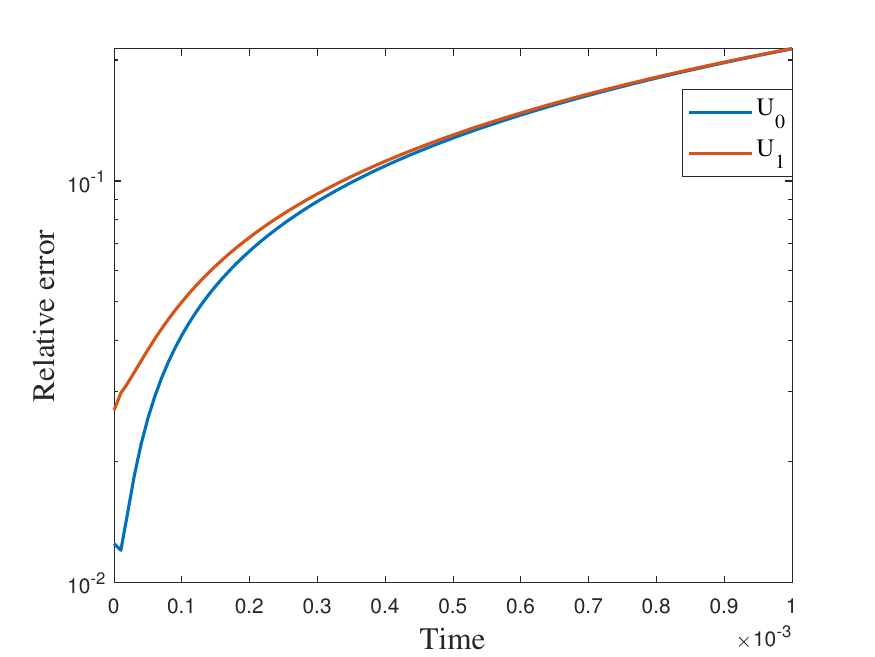}
        \end{minipage}
    }
    \subfloat
    {
        \begin{minipage}[t]{5cm}
            \centering
            \includegraphics[width=4.5cm,height=4cm]{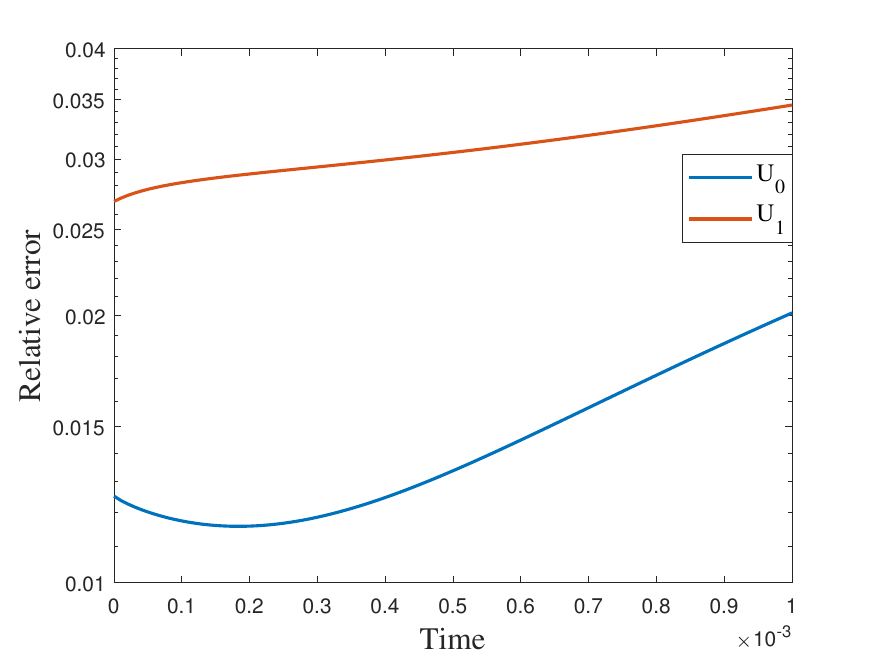}
        \end{minipage}
    }
    \\
    \subfloat
    {
        \begin{minipage}[t]{5cm}
            \centering
            \includegraphics[width=4.5cm,height=4cm]{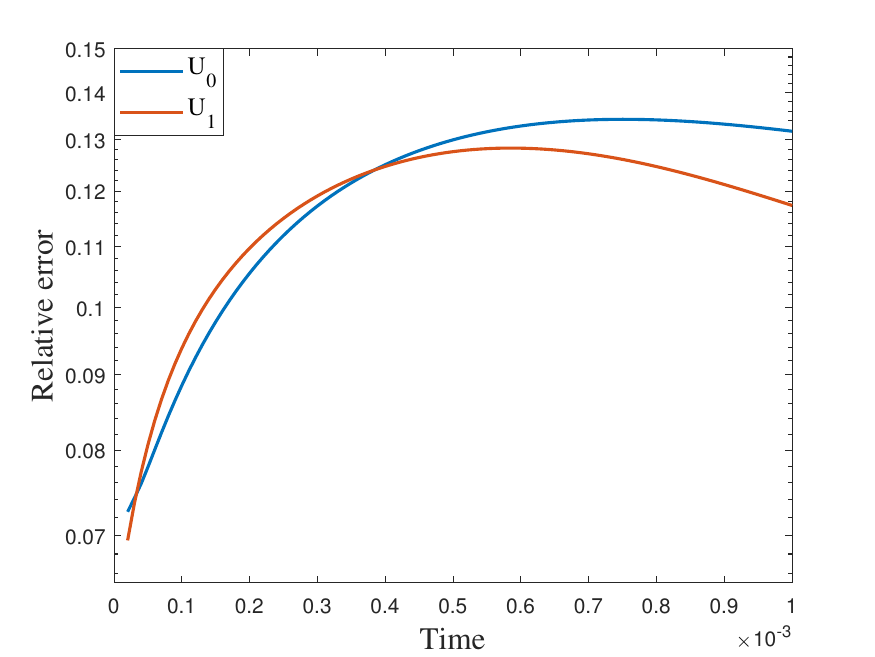}
        \end{minipage}
    }
    \subfloat
    {
        \begin{minipage}[t]{5cm}
            \centering
            \includegraphics[width=4.5cm,height=4cm]{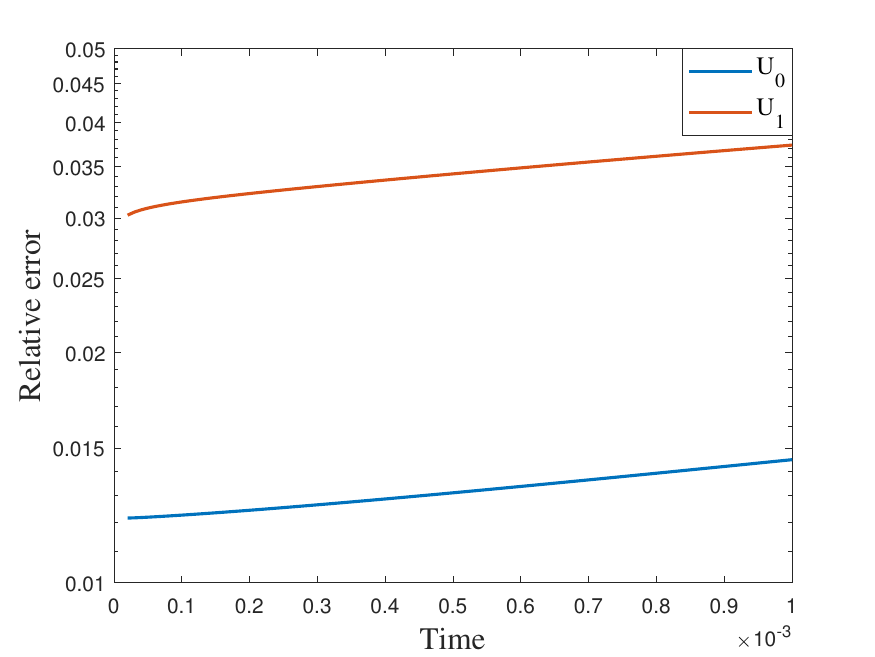}
        \end{minipage}
    }
    \subfloat
    {
        \begin{minipage}[t]{5cm}
            \centering
            \includegraphics[width=4.5cm,height=4cm]{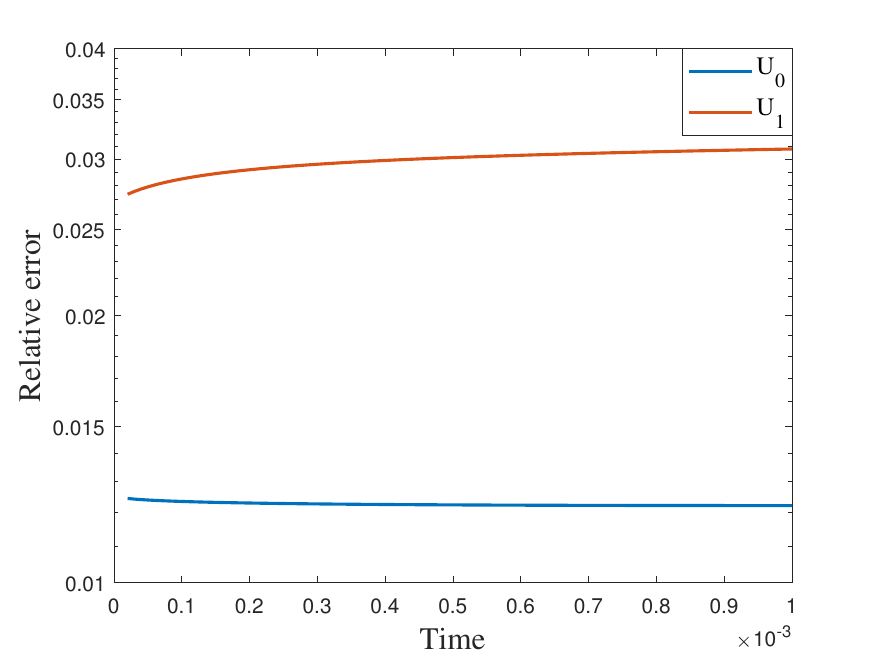}
        \end{minipage}
    }
    \caption{The first row: relative errors by explicit L1 scheme; The second row: relative errors by EI method. For each column, we take \(\alpha = 0.3, 0.6, 0.9\) form left to right.}
    \label{error_example3_k2_semi}
\end{figure}

\subsection{Rough or discontinuous media}
In our final example, we consider the medium in which the high-permeability region is more rough and discontinuous, as shown in Figure \ref{dis_media}. We set the initial condition \(u_{0}(x_{1},x_{2}) = -x_{1} (1-x_{1}) x_{2} (1-x_{2})\) and the source term \(f = 3u + t^{5}.\) The relative errors of our method are shown in Figure \ref{error_EI_discontinuous_kappa}. We would also illustrate the convergence rate of our method in this numerical experiment. Table \ref{table_error} displays the convergence rate for different fractional derivative. We note that to compute the convergence rate w.r.t. time, we keep the semidiscrete scheme the same (where we use the multicontinuum upscaled method in space) and take different time step size for the temporal discretization. We would evaluate the order of convergence by \(\log \bigl( e_{i}^{n}(\Delta t)/ e_{i}^{n}(\Delta t/2) \bigr), \ i=0,1\), where \(e_{i}^{n}(\Delta t)\) represents the relative error at \(t=T\) with respect to the time step \(\Delta t\).

\begin{figure}[H]
    \centering

    \includegraphics[width = 5cm]{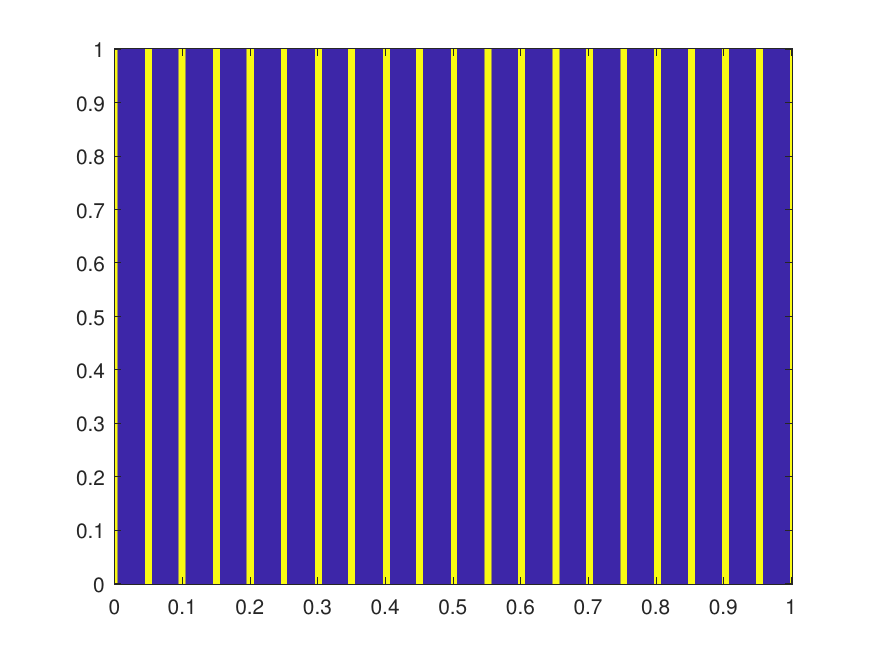}
    \caption{\(\kappa\) for a layered media.}
    \label{dis_media}
    
\end{figure}

\begin{figure}[H]
    \centering
    \subfloat
    {
        \begin{minipage}[t]{5cm}
            \centering
            \includegraphics[width=4.5cm,height=4cm]{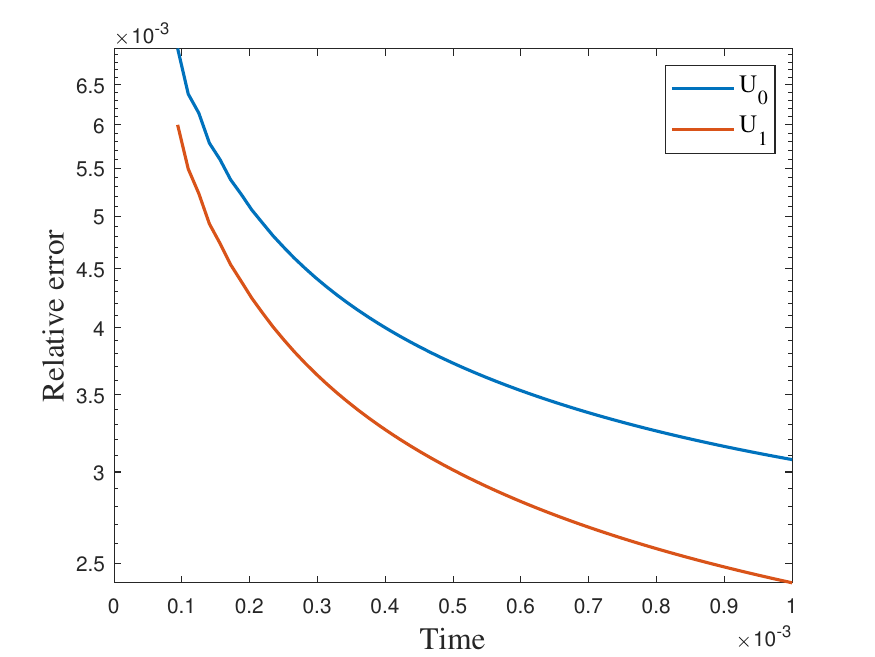}
        \end{minipage}
    }
    \subfloat
    {
        \begin{minipage}[t]{5cm}
            \centering
            \includegraphics[width=4.5cm,height=4cm]{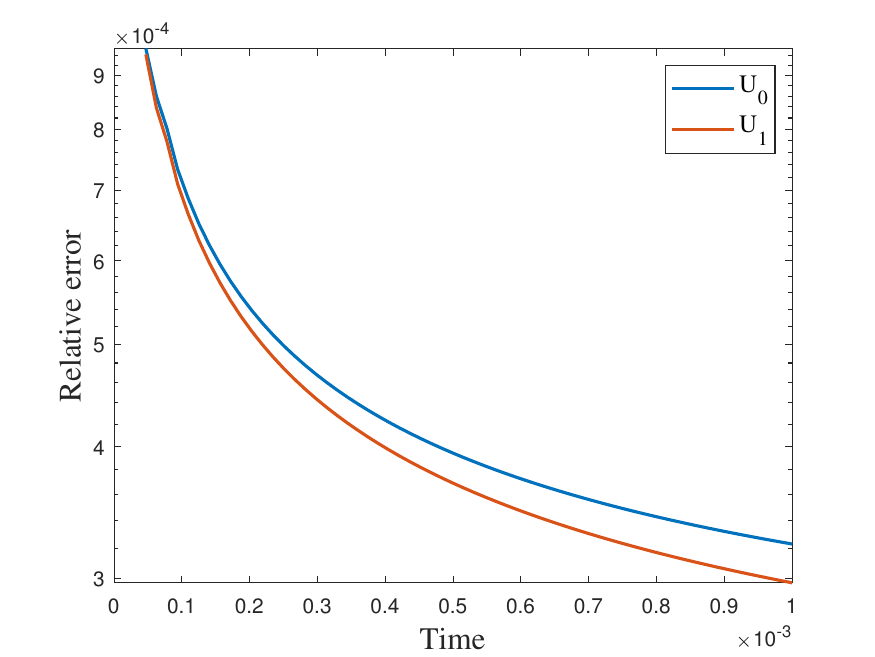}
        \end{minipage}
    }
    \subfloat
    {
        \begin{minipage}[t]{5cm}
            \centering
            \includegraphics[width=4.5cm,height=4cm]{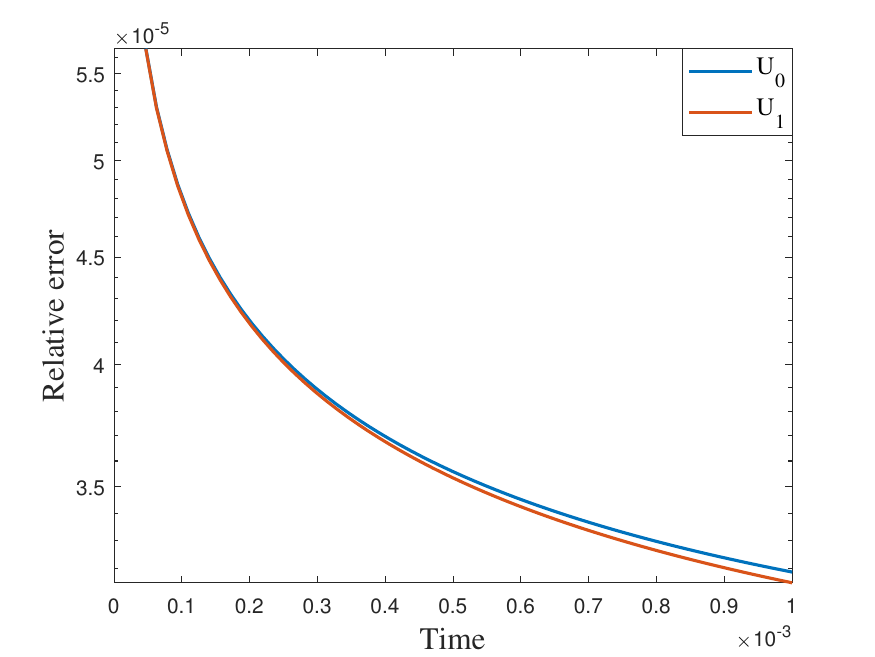}
        \end{minipage}
    }
    \caption{Relative errors by EI method. We take \(\alpha = 0.3, 0.5, 0.8\) form left to right.}
    \label{error_EI_discontinuous_kappa}
\end{figure}


\begin{table}[H]
    \belowrulesep=0pt
    \aboverulesep=0pt
    \centering
    \begin{tabular}{ccccccc}
        \toprule
        \(\alpha\)&0.3&0.4&0.5&0.6&0.7&0.8
        \\
        \midrule
        \(U_{0}\)&1.1301&1.4092&1.4404&1.6527&1.6706&1.7880\\
        \(U_{1}\)&1.2190&1.3430&1.4641&1.6314&1.6815&1.7879\\
        \bottomrule
    \end{tabular}
    \caption{Relative errors at \(t=T\).}
    \label{table_error}
\end{table}

\section{Conclusions}
In this paper, we present a time fractional multicontinuum upscaled model for solving the high-contrast fractional diffusion equation. The multicontinuum homogenization method is used to derive a coarse scale model, where the solutions are approximated through averages and accounts for gradient effects within each continuum, while incorporating microscale information in the upscaled quantities. We conduct a semi-discretized error analysis. We demonstrate that the nonlocal multicontinuum downscaling (NLMC) operator can be effectively approximated by the CEM-GMsFEM downscaling operator. Additionally, we utilize an exponential integrator approach for the time fractional derivative term and prove the convergence of the full discrete scheme. Numerical examples show that our method exhibits greater stability compared to the L1 scheme, while maintaining a similar convergence order. We explore various high-contrast heterogeneous media and source terms in our numerical experiments. It is noted that our proposed approach can be extended to scenarios involving two continua under appropriate assumptions. Future research would consist of addressing the singularity of the time fractional derivative and extended current approach to the high-order exponential integrator.

\section*{Acknowledgments}
Y. Wang's work is partially supported by the NSFC grant 12301559. W.T. Leung is partially supported by the Hong Kong RGC Early Career Scheme 21307223.

\bibliographystyle{unsrt}
\bibliography{nlmc_frac}

\end{document}